\documentclass[12pt,a4paper]{article}

\usepackage[utf8]{inputenc}
\usepackage[T1]{fontenc}
\usepackage[english]{babel}
\usepackage{amsmath,amssymb,amsthm}
\usepackage{bm}
\usepackage{geometry}
\usepackage{graphicx}
\usepackage{booktabs}
\usepackage{array}
\usepackage{float}
\usepackage{hyperref}
\usepackage{enumitem}
\usepackage{amsmath, amsthm}
\usepackage{graphicx}
\usepackage{booktabs}
\usepackage{placeins}

\newtheorem{lemma}{Lemma}
\newtheorem{theorem}[lemma]{Theorem}

\graphicspath{{figures/}}
\hypersetup{colorlinks=true,linkcolor=blue,citecolor=blue,urlcolor=blue}

\title{
A Compact Two-Stage Fourth-Order Two-Derivative IMEX Method with Mixed Compatibility, High Implicit-Solve Efficiency,
and Enhanced Stiff Decay}
\author{Zhixin Huo\thanks{Corresponding author. E-mail: \texttt{zhixinhuo@hpu.edu.cn}.}\\ {\small School of Mathematics and Information Science, Henan Polytechnic University}\\ {\small Jiaozuo 454003, Henan, China} }
\date{}

\begin{document}
\maketitle

\begin{abstract}

For additively split stiff evolution problems, classical fourth-order
IMEX Runge--Kutta methods usually require several stages and complicated
coupling order conditions. This paper proposes a compact fourth-order
IMEX-type method based on a two-derivative formulation and analyzes its
accuracy and stability properties. The proposed scheme incorporates the
mixed explicit--implicit interaction directly through temporal
derivatives evaluated along the full vector field, which ensures mixed
compatibility for non-commuting split systems. With only one intermediate
stage and two implicit solves per time step, the method achieves
fourth-order accuracy while improving the accuracy obtained per implicit
solve compared with classical multi-stage fourth-order IMEX--RK schemes. In addition,
the method exhibits stronger damping of stiff modes in the purely
implicit scalar limit and in the strong implicit-stiffness limit with a
fixed explicit component. In the purely implicit scalar limit, its
stability factor decays quadratically as the stiffness increases,
whereas a representative classical fourth-order IMEX--RK reference
method shows only linear decay. Numerical experiments
on non-commuting split systems, scalar stiff-mode damping, increasing
stiffness tests, and one- and two-dimensional advection--diffusion
high-mode problems confirm the mixed consistency, the predicted stiff
decay, and the smaller errors obtained under equal implicit-solve
budgets in strongly stiff regimes.

\noindent\textbf{Keywords:} IMEX methods; two-derivative schemes; compact structure; mixed compatibility; stiff-mode damping; fourth-order accuracy

\end{abstract}

\section{Introduction}
\label{sec:introduction}

High-order time discretization methods are essential for the numerical simulation of time-dependent ordinary and partial differential equations. Classical Runge--Kutta (RK) methods provide a fundamental framework for constructing high-order one-step methods \cite{Runge1895,Kutta1901,Butcher1963,Butcher1972,HairerNorsettWanner1993,Butcher2016}. These methods are inherently \emph{single-derivative} in nature: they achieve high-order temporal accuracy solely by evaluating the right-hand side function (i.e., the first time derivative of the solution) at multiple intermediate stages. For non-stiff problems, explicit RK methods are widely used because of their simplicity and high-order accuracy. However, for stiff or multiscale problems, purely explicit methods usually suffer from severe time-step restrictions, while implicit methods are often required for stability \cite{Alexander1977,HairerWanner1996}.

A common strategy is to split the right-hand side into a non-stiff part and a stiff part. Consider the additively split system
\begin{equation}
    \frac{d\mathbf{u}}{dt}
    =
    \mathcal{F}(\mathbf{u})
    +
    \mathcal{G}(\mathbf{u}),
    \label{eq:split_system}
\end{equation}
where $\mathcal{F}$ denotes the non-stiff component and $\mathcal{G}$ denotes the stiff component. In an implicit--explicit (IMEX) formulation, $\mathcal{F}$ is treated explicitly, while $\mathcal{G}$ is treated implicitly.

Implicit--explicit Runge--Kutta methods provide a systematic framework for such additively split problems \cite{AscherRuuthSpiteri1997,KennedyCarpenter2003,PareschiRusso2005}. They have been widely used for convection--diffusion--reaction equations, hyperbolic systems with stiff relaxation, kinetic equations, and multiscale problems \cite{BoscarinoRusso2009,BoscarinoPareschiRusso2013,BoscarinoPareschiRusso2017}. The order conditions, stability properties, and generalized additive structures of such methods have also been extensively studied \cite{Higueras2006,SanduGuenther2015}.

Classical multi-stage fourth-order IMEX--RK schemes are mature and reliable, but they also have several inherent limitations that stem from their single-derivative nature. To achieve fourth-order accuracy, they typically require several stages, and the number of implicit solves can be substantial, as in the six-stage KC--ARK4 reference method with five nontrivial implicit solves per time step. These stages are needed to satisfy the numerous order conditions and mixed coupling constraints. The coefficients are obtained from solving complicated algebraic systems, and the resulting Butcher tableaux are often non-unique and may suffer from large error constants. Moreover, each additional implicit stage increases the cost of solving nonlinear stiff systems, making the overall computation expensive. Most importantly, these methods rely exclusively on the physical quantities $\mathcal{F}$ and $\mathcal{G}$ (the first derivatives) and do not directly exploit any higher-order temporal derivative information, even though such information is available through the Jacobians and can be computed with moderate extra effort.

On the other hand, \emph{multiderivative} time discretizations---such as Lax--Wendroff-type schemes and two-derivative Runge--Kutta methods---leverage temporal derivative information to achieve high-order accuracy with much more compact time-stepping structures \cite{LaxWendroff1960,SealGucluChristlieb2014,ChristliebGottliebGrantSeal2016}. In particular, two-stage fourth-order time discretizations have demonstrated that fourth-order accuracy can be attained using only two stages by combining physical quantities and their temporal derivatives, instead of relying on four classical RK stages \cite{LiDu2016}. This idea has been successfully applied in high-order flow solvers, gas-kinetic schemes, relativistic hydrodynamics, and high-order boundary treatments \cite{PanXuLiLi2016,YuanTang2018,DuLi2018}. The advantages are clear: fewer stages reduce the number of function evaluations and implicit solves, the derivative information provides additional consistency and damping, and the compact stencil simplifies implementation and parallelization.

Motivated by these two-derivative ideas, this paper constructs an IMEX-like two-stage fourth-order method that evaluates
\begin{equation}
    \dot{\mathcal{F}}
    =
    \mathcal{F}_{\mathbf u}(\mathcal{F}+\mathcal{G}),
    \qquad
    \dot{\mathcal{G}}
    =
    \mathcal{G}_{\mathbf u}(\mathcal{F}+\mathcal{G}),
\end{equation}
along the full vector field.
By incorporating the first-order temporal derivatives of the split components along the full vector field, the proposed method achieves fourth-order temporal accuracy with only \emph{two stages}. Compared with classical multi-stage fourth-order IMEX--RK schemes, the main contributions of this work can be summarized as follows. First, the proposed discretization possesses natural mixed consistency, allowing the coupling between the explicit and implicit components to be incorporated intrinsically within the scheme. As a result, cumbersome explicit--implicit coupling order conditions are avoided. 
Second, a compact two-stage formulation is developed, which significantly reduces the number of stages while preserving fourth-order accuracy, thereby lowering the computational cost per time step.
Moreover, the method admits a clear Hermite-type integral interpretation, which provides a solid mathematical foundation for its high-order accuracy. Third, the method exhibits stronger stiff decay in the purely implicit scalar limit and in the strong implicit-stiffness limit with a fixed explicit component, thereby providing a more precise stiff-mode damping mechanism for stiff and multiscale problems.

The remainder of this paper is organized as follows. Section~\ref{sec:classical_imex_rk} reviews the classical multi-stage fourth-order IMEX--RK construction and explicitly specifies the Kennedy--Carpenter ARK4(3)6L[2]SA method used as the reference KC--ARK4 scheme. Section~\ref{sec:proposed_method} introduces an IMEX-type two-derivative two-stage method based on the Hermite interpolation idea. Section~\ref{sec:consistency} proves the fourth-order consistency of the proposed scheme via Taylor expansion and derivative matching. Section~\ref{sec:stability} discusses stability-related properties, including linear stability and stiff decay in the purely explicit, purely implicit, and explicit--implicit coupled regimes. Section~\ref{sec:discussion_comparison} provides a systematic comparison between the proposed method and classical multi-stage fourth-order IMEX--RK schemes in terms of mixed consistency, number of stages and computational cost, as well as stiff damping capability. Section~\ref{sec:numerical_validation} presents numerical experiments that validate the theoretical findings. Section~\ref{sec:conclusion} concludes the paper.

\section{Classical Fourth-Order IMEX Runge-Kutta Method}
\label{sec:classical_imex_rk}

For the split system \eqref{eq:split_system}, an $s$-stage IMEX Runge--Kutta method can be written as
\begin{equation}
    \mathbf{y}_i
    =
    \mathbf{u}^n
    +
    \Delta t
    \sum_{j=1}^{i-1}
    a^E_{ij}
    \mathcal{F}(\mathbf{y}_j)
    +
    \Delta t
    \sum_{j=1}^{i}
    a^I_{ij}
    \mathcal{G}(\mathbf{y}_j),
    \qquad i=1,\ldots,s,
    \label{eq:imex_rk_stage}
\end{equation}
and
\begin{equation}
    \mathbf{u}^{n+1}
    =
    \mathbf{u}^n
    +
    \Delta t
    \sum_{i=1}^s
    b_i^E
    \mathcal{F}(\mathbf{y}_i)
    +
    \Delta t
    \sum_{i=1}^s
    b_i^I
    \mathcal{G}(\mathbf{y}_i).
    \label{eq:imex_rk_update}
\end{equation}
Here $A^E=(a^E_{ij})$ is strictly lower triangular, while $A^I=(a^I_{ij})$ is usually lower triangular or diagonally implicit.

The construction of a fourth-order IMEX--RK method is not equivalent to simply combining a fourth-order explicit RK method with a fourth-order implicit RK method. Because of the additive splitting, the explicit and implicit components interact with each other. Thus, the two tableaux must satisfy not only individual order conditions but also mixed coupling conditions. If the explicit and implicit parts share the same weights and abscissae, typical fourth-order conditions include
\begin{equation*}
    b^T e = 1,
    \qquad
    b^T c = \frac{1}{2},
    \qquad
    b^T c^2 = \frac{1}{3},
    \qquad
    b^T c^3 = \frac{1}{4},
\end{equation*}
\begin{equation*}
    b^T A^{\sigma}c = \frac{1}{6},
    \qquad
    b^T C A^{\sigma}c = \frac{1}{8},
    \qquad
    b^T A^{\sigma}c^2 = \frac{1}{12},
    \qquad
    \sigma\in\{E,I\},
\end{equation*}
and the mixed coupling conditions
\begin{equation*}
    b^T A^{\sigma}A^{\tau}c
    =
    \frac{1}{24},
    \qquad
    \sigma,\tau\in\{E,I\}.
\end{equation*}
For example, $b^T A^E A^I c=1/24$ and $b^T A^I A^E c=1/24$ are associated with mixed elementary differentials such as $\mathcal{F}_{\mathbf u}\mathcal{G}$ and $\mathcal{G}_{\mathbf u}\mathcal{F}$. This motivates a more compact construction based on temporal derivative information.

\subsection{The Kennedy--Carpenter ARK4(3)6L[2]SA reference method}
\label{subsec:kc_ark4_reference}

To remove any ambiguity in the numerical comparison, the reference
fourth-order IMEX--RK method used in this paper is specified as the
six-stage Kennedy--Carpenter ARK4(3)6L[2]SA additive Runge--Kutta method
\cite{KennedyCarpenter2003}. In what follows, this method is abbreviated
as KC--ARK4. It is a standard fourth-order additive IMEX--RK scheme for
convection--diffusion--reaction type split systems and is therefore a
suitable representative of classical high-order one-derivative IMEX--RK
constructions.

Although KC--ARK4 is used here as a classical fourth-order IMEX--RK
reference method, its fourth-order accuracy is not obtained by simply
combining a fourth-order explicit RK method with a fourth-order implicit
RK method. As a classical additive Runge--Kutta method, KC--ARK4 also
satisfies the mixed coupling order conditions associated with the
interaction between the explicit component \(\mathcal F\) and the
implicit component \(\mathcal G\). These coupling conditions are the
main source of the complicated coefficient constraints in high-order
IMEX--RK constructions.

For the split problem \eqref{eq:split_system}, KC--ARK4 uses an explicit
tableau for \(\mathcal F\) and a diagonally implicit tableau for
\(\mathcal G\). The stage equations are those in
\eqref{eq:imex_rk_stage}. The explicit and implicit Butcher tableaux are
given in Tables~\ref{tab:kc_ark4_explicit} and
\ref{tab:kc_ark4_implicit}, respectively. The fourth-order weights are
used in the numerical comparisons; the embedded third-order weights are
listed only to identify the method completely.

\begin{table}[htbp]
\centering
\caption{Explicit tableau of the Kennedy--Carpenter ARK4(3)6L[2]SA method.}
\label{tab:kc_ark4_explicit}
\scriptsize
\renewcommand{\arraystretch}{1.30}
\resizebox{\textwidth}{!}{$
\begin{array}{c|cccccc}
0
& 0 & 0 & 0 & 0 & 0 & 0
\\
\frac{1}{2}
& \frac{1}{2} & 0 & 0 & 0 & 0 & 0
\\
\frac{83}{250}
& \frac{13861}{62500}
& \frac{6889}{62500}
& 0 & 0 & 0 & 0
\\
\frac{31}{50}
& -\frac{116923316275}{2393684061468}
& -\frac{2731218467317}{15368042101831}
& \frac{9408046702089}{11113171139209}
& 0 & 0 & 0
\\
\frac{17}{20}
& -\frac{451086348788}{2902428689909}
& -\frac{2682348792572}{7519795681897}
& \frac{12662868775082}{11960479115383}
& \frac{3355817975965}{11060851509271}
& 0 & 0
\\
1
& \frac{647845179188}{3216320057751}
& \frac{73281519250}{8382639484533}
& \frac{552539513391}{3454668386233}
& \frac{3354512671639}{8306763924573}
& \frac{4040}{17871}
& 0
\\
\hline
b
& \frac{82889}{524892}
& 0
& \frac{15625}{83664}
& \frac{69875}{102672}
& -\frac{2260}{8211}
& \frac{1}{4}
\\
\widehat b
& \frac{4586570599}{29645900160}
& 0
& \frac{178811875}{945068544}
& \frac{814220225}{1159782912}
& -\frac{3700637}{11593932}
& \frac{61727}{225920}
\end{array}
$}
\end{table}

\begin{table}[htbp]
\centering
\caption{Implicit tableau of the Kennedy--Carpenter ARK4(3)6L[2]SA method.}
\label{tab:kc_ark4_implicit}
\scriptsize
\renewcommand{\arraystretch}{1.30}
\resizebox{\textwidth}{!}{$
\begin{array}{c|cccccc}
0
& 0 & 0 & 0 & 0 & 0 & 0
\\
\frac{1}{2}
& \frac{1}{4}
& \frac{1}{4}
& 0 & 0 & 0 & 0
\\
\frac{83}{250}
& \frac{8611}{62500}
& -\frac{1743}{31250}
& \frac{1}{4}
& 0 & 0 & 0
\\
\frac{31}{50}
& \frac{5012029}{34652500}
& -\frac{654441}{2922500}
& \frac{174375}{388108}
& \frac{1}{4}
& 0 & 0
\\
\frac{17}{20}
& \frac{15267082809}{155376265600}
& -\frac{71443401}{120774400}
& \frac{730878875}{902184768}
& \frac{2285395}{8070912}
& \frac{1}{4}
& 0
\\
1
& \frac{82889}{524892}
& 0
& \frac{15625}{83664}
& \frac{69875}{102672}
& -\frac{2260}{8211}
& \frac{1}{4}
\\
\hline
b
& \frac{82889}{524892}
& 0
& \frac{15625}{83664}
& \frac{69875}{102672}
& -\frac{2260}{8211}
& \frac{1}{4}
\\
\widehat b
& \frac{4586570599}{29645900160}
& 0
& \frac{178811875}{945068544}
& \frac{814220225}{1159782912}
& -\frac{3700637}{11593932}
& \frac{61727}{225920}
\end{array}
$}
\end{table}

The first implicit diagonal coefficient is zero, so the first stage is
explicit with respect to the stiff component and does not require the
solution of an implicit algebraic equation. For stages
\(i=2,\ldots,6\), the diagonal implicit coefficient is
\(a^I_{ii}=1/4\). Consequently, each time step of KC--ARK4 contains six
stages but only five nontrivial implicit solves. This is the precise
meaning of the cost statement used in the numerical section.

We stress that KC--ARK4 remains a classical one-derivative additive
Runge--Kutta method: it evaluates only the split vector fields
\(\mathcal F\) and \(\mathcal G\) at the Runge--Kutta stages. By
contrast, the method proposed in this paper also uses the temporal
derivatives of the split components evaluated along the full vector field
\(\mathcal F+\mathcal G\). Thus, the comparison is not between two
identical construction principles, but between a representative
classical fourth-order IMEX--RK method and the present compact
two-derivative IMEX-type fourth-order formulation.

\section{The Construction of a Two-Stage IMEX-Type Scheme}
\label{sec:proposed_method}

The design of the proposed scheme is motivated by the need to achieve
fourth-order temporal accuracy within a compact IMEX-type framework while
fully preserving the mixed interaction between the explicit and implicit
components. Although both KC--ARK4 and the proposed method account for
such mixed interactions, the underlying mechanisms are essentially
different. In KC--ARK4, the mixed interaction is enforced through the
additive Runge--Kutta coupling order conditions imposed on the explicit
and implicit tableaux. In contrast, the proposed method incorporates the
mixed terms directly through the temporal derivatives of the split
components evaluated along the full vector field
\(\mathcal H=\mathcal F+\mathcal G\). This construction allows the stiff
component \(\mathcal G\) to be treated implicitly and the non-stiff
component \(\mathcal F\) explicitly, while using only a single
intermediate stage to reduce the number of function evaluations relative
to classical multi-stage Runge--Kutta methods. Therefore, the proposed
scheme should not be interpreted as satisfying the classical IMEX--RK
coupling conditions in the usual Runge--Kutta sense; rather, it achieves
mixed consistency through its two-derivative Taylor/Hermite-type
construction, and these considerations together determine the structure
and coefficients of the method.

To incorporate the coupling correctly, we define the full vector field
\begin{equation}
    \mathcal{H}(\mathbf{u})
    =
    \mathcal{F}(\mathbf{u})
    +
    \mathcal{G}(\mathbf{u}),
\end{equation}
and then define the temporal derivatives of the split operators as
\begin{equation}
    \dot{\mathcal{F}}(\mathbf{u})
    =
    \mathcal{F}_{\mathbf u}(\mathbf{u})
    \mathcal{H}(\mathbf{u})
    =
    \mathcal{F}_{\mathbf u}(\mathbf{u})
    \left[
    \mathcal{F}(\mathbf{u})
    +
    \mathcal{G}(\mathbf{u})
    \right],
    \label{eq:Fdot}
\end{equation}
and
\begin{equation}
    \dot{\mathcal{G}}(\mathbf{u})
    =
    \mathcal{G}_{\mathbf u}(\mathbf{u})
    \mathcal{H}(\mathbf{u})
    =
    \mathcal{G}_{\mathbf u}(\mathbf{u})
    \left[
    \mathcal{F}(\mathbf{u})
    +
    \mathcal{G}(\mathbf{u})
    \right].
    \label{eq:Gdot}
\end{equation}
Using only \(\mathcal F_{\mathbf u}\mathcal F\) and \(\mathcal G_{\mathbf u}\mathcal G\) would discard the mixed terms that arise from the additive splitting, which would break the consistency of the scheme.

\medskip
\noindent\textbf{The two-stage scheme is defined as follows.}

\medskip
\noindent\textbf{Stage 1 (intermediate value):} Compute \(\mathbf U^*\) from the implicit equation
\begin{equation}
    \mathbf{u}^*
    =
    \mathbf{u}^n
    +
    \frac{\Delta t}{2}
    \mathcal{F}(\mathbf{u}^n)
    +
    \frac{\Delta t^2}{8}
    \dot{\mathcal{F}}(\mathbf{u}^n)
    +
    \frac{\Delta t}{2}
    \mathcal{G}(\mathbf{u}^*)
    -
    \frac{\Delta t^2}{8}
    \dot{\mathcal{G}}(\mathbf{u}^*).
    \label{eq:stage}
\end{equation}

\medskip
\noindent\textbf{Stage 2 (final update):} Advance the solution to \(\mathbf u^{n+1}\) via the Hermite‑type quadrature
\begin{equation}
\begin{aligned}
    \mathbf{u}^{n+1}
    =
    \mathbf{u}^n
    &+
    \Delta t
    \mathcal{F}(\mathbf{u}^n)
    +
    \frac{\Delta t^2}{6}
    \left[
    \dot{\mathcal{F}}(\mathbf{u}^n)
    +
    2\dot{\mathcal{F}}(\mathbf{u}^*)
    \right]
    \\
    &+
    \Delta t
    \mathcal{G}(\mathbf{u}^{n+1})
    -
    \frac{\Delta t^2}{6}
    \left[
    \dot{\mathcal{G}}(\mathbf{u}^{n+1})
    +
    2\dot{\mathcal{G}}(\mathbf{u}^*)
    \right].
\end{aligned}
\label{eq:update}
\end{equation}

The \textbf{Stage 1} is constructed so that it approximates the exact midpoint solution \(\mathbf u(t_n+\Delta t/2)\) to third order. Indeed, a Taylor expansion of the stage equation gives
\[
\mathbf u^*
=
\mathbf u^n
+
\frac{\Delta t}{2}\mathcal H(\mathbf u^n)
+
\frac{\Delta t^2}{8}\mathcal H_{\mathbf u}(\mathbf u^n)\mathcal H(\mathbf u^n)
+
\mathcal O(\Delta t^3),
\]
which matches the expansion of the exact midpoint up to \(\Delta t^2\). The coefficients \(1/2\) and \(1/8\), together with the implicit evaluation of \(\mathcal G\) and explicit evaluation of \(\mathcal F\), are chosen precisely to achieve this accuracy.

The update of \textbf{Stage 2} is derived from the fifth‑order Hermite rule for a smooth function \(f(t)\):
\[
\int_{t_n}^{t_{n+1}} f(t)\,dt
=
\Delta t\,f(t_n)+\frac{\Delta t^2}{6}\bigl[\dot f(t_n)+2\dot f(t_n+\Delta t/2)\bigr]+\mathcal O(\Delta t^5).
\]
Applying this rule separately to \(\mathcal F\) and \(\mathcal G\), and replacing the exact midpoint by \(\mathbf U^*\), yields exactly the above update. Because the midpoint error is \(\mathcal O(\Delta t^3)\) and it is multiplied by \(\Delta t^2/6\), the resulting quadrature error remains \(\mathcal O(\Delta t^5)\), thus preserving the fifth‑order accuracy of the Hermite rule. In this second stage, the stiff part \(\mathcal G\) is again treated implicitly (appearing at the unknown \(\mathbf u^{n+1}\)), while \(\mathcal F\) is evaluated explicitly at the known state \(\mathbf u^n\) and the computed stage.

\section{Mixed Terms and Fourth-Order Consistency}
\label{sec:consistency}

The Taylor expansion of the exact solution gives
\begin{equation}
    \mathbf{u}(t_n+\Delta t)
    =
    \mathbf{u}^n
    +
    \Delta t\mathcal{H}(\mathbf{u}^n)
    +
    \frac{\Delta t^2}{2}
    \mathcal{H}_{\mathbf u}(\mathbf{u}^n)\mathcal{H}(\mathbf{u}^n)
    +
    \mathcal{O}(\Delta t^3).
\end{equation}
Since
\begin{equation}
\begin{aligned}
    \mathcal{H}_{\mathbf u}\mathcal{H}
    &=
    (\mathcal{F}_{\mathbf u}+\mathcal{G}_{\mathbf u})
    (\mathcal{F}+\mathcal{G})
    \\
    &=
    \mathcal{F}_{\mathbf u}\mathcal{F}
    +
    \mathcal{F}_{\mathbf u}\mathcal{G}
    +
    \mathcal{G}_{\mathbf u}\mathcal{F}
    +
    \mathcal{G}_{\mathbf u}\mathcal{G},
\end{aligned}
\label{eq:mixed_terms}
\end{equation}
the exact expansion contains both pure and mixed contributions. By \eqref{eq:Fdot} and \eqref{eq:Gdot}, these mixed terms are naturally retained in $\dot{\mathcal{F}}+\dot{\mathcal{G}}$.


\begin{lemma}
\label{lem:midpoint}
Let $\mathcal{H}$ be sufficiently smooth, and let $\mathbf{u}^n = \mathbf{u}(t_n)$. Assume that $\mathbf{U}^*$ is defined by the stage equation \eqref{eq:stage}. Then the Taylor expansion of $\mathbf{U}^*$ about $\mathbf{u}^n$ is given by
\begin{equation}
\mathbf{u}^*
=
\mathbf{u}^n
+
\frac{\Delta t}{2}\mathcal{H}(\mathbf{u}^n)
+
\frac{\Delta t^2}{8}
\mathcal{H}_{\mathbf u}(\mathbf{u}^n)\mathcal{H}(\mathbf{u}^n)
+
\mathcal{O}(\Delta t^3),
\label{eq:mid_star}
\end{equation}
which coincides with the Taylor expansion of the exact solution $\mathbf{u}(t)$ at $t_n + \Delta t/2$. Consequently,
\begin{equation}
\mathbf{u}^*
=
\mathbf{u}\left(t_n+\frac{\Delta t}{2}\right)
+
\mathcal{O}(\Delta t^3).
\label{eq:midpoint_approx}
\end{equation}
\end{lemma}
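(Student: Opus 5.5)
The plan is a two-step bootstrap on the implicit stage equation \eqref{eq:stage}: first show that $\mathbf{u}^*$ exists and that $\mathbf{u}^*-\mathbf{u}^n=\mathcal{O}(\Delta t)$, then substitute Taylor expansions of $\mathcal{G}(\mathbf{u}^*)$ and $\dot{\mathcal{G}}(\mathbf{u}^*)$ about $\mathbf{u}^n$ back into \eqref{eq:stage}.

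\textbf{Step 1: existence and a first-order bound.} Write $\mathbf{u}^*=\mathbf{u}^n+\boldsymbol{\delta}$ and define
\[
\Phi(\boldsymbol{\delta},\Delta t)=\boldsymbol{\delta}-\frac{\Delta t}{2}\mathcal{F}(\mathbf{u}^n)-\frac{\Delta t^2}{8}\dot{\mathcal{F}}(\mathbf{u}^n)-\frac{\Delta t}{2}\mathcal{G}(\mathbf{u}^n+\boldsymbol{\delta})+\frac{\Delta t^2}{8}\dot{\mathcal{G}}(\mathbf{u}^n+\boldsymbol{\delta}).
\]
We have $\Phi(\mathbf 0,0)=\mathbf 0$ and $\partial_{\boldsymbol\delta}\Phi(\mathbf 0,0)=I$. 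Smoothness of $\mathcal{F}$ and $\mathcal{G}$ makes $\dot{\mathcal{G}}$ smooth, so the implicit function theorem gives, for $\Delta t$ small, a unique smooth branch $\boldsymbol{\delta}(\Delta t)$ with $\boldsymbol{\delta}(0)=\mathbf 0$. In particular $\boldsymbol{\delta}=\mathcal{O}(\Delta t)$.

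This is a non-stiff consistency statement: the constants in the $\mathcal{O}$-terms depend on bounds for $\mathcal{G}$ and its derivatives. That is exactly what ``$\mathcal{H}$ sufficiently smooth'' provides, so I will state this dependence explicitly.

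\textbf{Step 2: substitute expansions.} Expand
\[
\mathcal{G}(\mathbf{u}^*)=\mathcal{G}(\mathbf{u}^n)+\mathcal{G}_{\mathbf u}(\mathbf{u}^n)\boldsymbol{\delta}+\mathcal{O}(|\boldsymbol\delta|^2),\qquad
\dot{\mathcal{G}}(\mathbf{u}^*)=\dot{\mathcal{G}}(\mathbf{u}^n)+\mathcal{O}(|\boldsymbol\delta|).
\]
Inserting these into \eqref{eq:stage} with $\boldsymbol\delta=\mathcal{O}(\Delta t)$ first gives $\boldsymbol{\delta}=\frac{\Delta t}{2}\mathcal{H}(\mathbf{u}^n)+\mathcal{O}(\Delta t^2)$.

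Feeding this back in, and evaluating everything at $\mathbf{u}^n$,
\[
\frac{\Delta t}{2}\mathcal{G}(\mathbf{u}^*)=\frac{\Delta t}{2}\mathcal{G}+\frac{\Delta t^2}{4}\mathcal{G}_{\mathbf u}\mathcal{H}+\mathcal{O}(\Delta t^3)=\frac{\Delta t}{2}\mathcal{G}+\frac{\Delta t^2}{4}\dot{\mathcal{G}}+\mathcal{O}(\Delta t^3),
\]
and
\[
-\frac{\Delta t^2}{8}\dot{\mathcal{G}}(\mathbf{u}^*)=-\frac{\Delta t^2}{8}\dot{\mathcal{G}}+\mathcal{O}(\Delta t^3).
\]
Summing with the explicit terms gives
\[
\mathbf{u}^*=\mathbf{u}^n+\frac{\Delta t}{2}(\mathcal{F}+\mathcal{G})+\frac{\Delta t^2}{8}(\dot{\mathcal{F}}+\dot{\mathcal{G}})+\mathcal{O}(\Delta t^3).
\]
By \eqref{eq:Fdot}, \eqref{eq:Gdot} and \eqref{eq:mixed_terms}, $\dot{\mathcal{F}}+\dot{\mathcal{G}}=\mathcal{H}_{\mathbf u}\mathcal{H}$, which yields \eqref{eq:mid_star}. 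The key point is that the implicit evaluation contributes $+\tfrac{1}{4}\dot{\mathcal G}$ and the correction term subtracts $\tfrac18\dot{\mathcal G}$. This leaves exactly $\tfrac18$, matching the explicit $\tfrac18\dot{\mathcal F}$, and this is where the mixed terms $\mathcal{F}_{\mathbf u}\mathcal{G}$ and $\mathcal{G}_{\mathbf u}\mathcal{F}$ enter with the correct weight.

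\textbf{Step 3: compare with the exact midpoint.} Since $\mathbf{u}'=\mathcal{H}(\mathbf{u})$ and $\mathbf{u}''=\mathcal{H}_{\mathbf u}\mathcal{H}$, Taylor's theorem gives
\[
\mathbf{u}\!\left(t_n+\tfrac{\Delta t}{2}\right)=\mathbf{u}^n+\frac{\Delta t}{2}\mathcal{H}+\frac12\Big(\frac{\Delta t}{2}\Big)^2\mathcal{H}_{\mathbf u}\mathcal{H}+\mathcal{O}(\Delta t^3),
\]
whose coefficient $\tfrac18$ agrees with \eqref{eq:mid_star}. Subtracting gives \eqref{eq:midpoint_approx}.

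The main obstacle is Step 1: justifying solvability and the a priori bound $\boldsymbol\delta=\mathcal{O}(\Delta t)$ for the implicit stage. I would handle it with the implicit function theorem as above, or alternatively a contraction argument on a ball of radius $C\Delta t$. The remaining steps are routine bookkeeping of Taylor remainders.
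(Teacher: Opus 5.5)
Your proposal is correct and takes essentially the same route as the paper. Both expand $\mathcal G(\mathbf u^*)$ and $\dot{\mathcal G}(\mathbf u^*)$ about $\mathbf u^n$, note that the implicit term contributes $\tfrac14\mathcal G_{\mathbf u}\mathcal H$, which combines with $-\tfrac18\dot{\mathcal G}$ and $\tfrac18\dot{\mathcal F}$ into $\tfrac18\mathcal H_{\mathbf u}\mathcal H$, and then compare with the exact midpoint Taylor series. The differences are minor: the paper merely asserts $\boldsymbol\delta=\mathcal O(\Delta t)$ where you justify it via the implicit function theorem, and the paper inverts $I-\tfrac{\Delta t}{2}\mathcal G_{\mathbf u}(\mathbf u^n)$ by a Neumann series where you substitute $\boldsymbol\delta=\tfrac{\Delta t}{2}\mathcal H+\mathcal O(\Delta t^2)$ back in, which is equivalent at this order.
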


\begin{proof}
Define
\[
\delta := \mathbf u^* - \mathbf u^n.
\]
The given stage equation \eqref{eq:stage} is
\[
\delta
=
\frac{\Delta t}{2}\mathcal F(\mathbf u^n)
+\frac{\Delta t^2}{8}\dot{\mathcal F}(\mathbf u^n)
+\frac{\Delta t}{2}\mathcal G(\mathbf u^*)
-\frac{\Delta t^2}{8}\dot{\mathcal G}(\mathbf u^*).
\]
Using the chain rule,
\[
\dot{\mathcal F}(\mathbf u^n)=\mathcal F_{\mathbf u}(\mathbf u^n)\mathcal H(\mathbf u^n),\qquad
\dot{\mathcal G}(\mathbf U^*)=\mathcal G_{\mathbf u}(\mathbf u^*)\mathcal H(\mathbf u^*),
\]
we obtain
\begin{equation}
\delta
=
\frac{\Delta t}{2}\mathcal F(\mathbf u^n)
+\frac{\Delta t^2}{8}\mathcal F_{\mathbf u}(\mathbf u^n)\mathcal H(\mathbf u^n)
+\frac{\Delta t}{2}\mathcal G(\mathbf u^*)
-\frac{\Delta t^2}{8}\mathcal G_{\mathbf u}(\mathbf u^*)\mathcal H(\mathbf u^*).
\label{derivative1}
\end{equation}

Now expand the terms evaluated at \(\mathbf u^*\) around \(\mathbf u^n\). From \eqref{derivative1} it is clear that \(\delta=O(\Delta t)\). Hence
\[
\mathcal G(\mathbf u^*)=\mathcal G(\mathbf u^n)+\mathcal G_{\mathbf u}(\mathbf u^n)\delta+O(\Delta t^2),
\]
and
\[
\mathcal G_{\mathbf u}(\mathbf u^*)\mathcal H(\mathbf u^*)
=
\mathcal G_{\mathbf u}(\mathbf u^n)\mathcal H(\mathbf u^n)+O(\Delta t).
\]
Substituting these expansions into \eqref{derivative1} gives
\begin{align}
\delta
&=
\frac{\Delta t}{2}\mathcal F(\mathbf u^n)
+\frac{\Delta t^2}{8}\mathcal F_{\mathbf u}(\mathbf u^n)\mathcal H(\mathbf u^n)
+\frac{\Delta t}{2}\bigl(\mathcal G(\mathbf u^n)+\mathcal G_{\mathbf u}(\mathbf u^n)\delta\bigr)
-\frac{\Delta t^2}{8}\mathcal G_{\mathbf u}(\mathbf u^n)\mathcal H(\mathbf u^n)
+O(\Delta t^3) \nonumber \\
&=
\frac{\Delta t}{2}\bigl(\mathcal F(\mathbf u^n)+\mathcal G(\mathbf u^n)\bigr)
+\frac{\Delta t^2}{8}\bigl(\mathcal F_{\mathbf u}(\mathbf u^n)-\mathcal G_{\mathbf u}(\mathbf u^n)\bigr)\mathcal H(\mathbf u^n)
+\frac{\Delta t}{2}\mathcal G_{\mathbf u}(\mathbf u^n)\delta
+O(\Delta t^3).
\label{derivative2}
\end{align}
Since \(\mathcal F+\mathcal G=\mathcal H\), equation \eqref{derivative2} becomes
\[
\delta
=
\frac{\Delta t}{2}\mathcal H(\mathbf u^n)
+\frac{\Delta t^2}{8}\bigl(\mathcal F_{\mathbf u}(\mathbf u^n)-\mathcal G_{\mathbf u}(\mathbf u^n)\bigr)\mathcal H(\mathbf u^n)
+\frac{\Delta t}{2}\mathcal G_{\mathbf u}(\mathbf u^n)\delta
+O(\Delta t^3).
\]
Collect all \(\delta\) terms on the left:
\[
\left(I-\frac{\Delta t}{2}\mathcal G_{\mathbf u}(\mathbf u^n)\right)\delta
=
\frac{\Delta t}{2}\mathcal H(\mathbf u^n)
+\frac{\Delta t^2}{8}\bigl(\mathcal F_{\mathbf u}(\mathbf u^n)-\mathcal G_{\mathbf u}(\mathbf u^n)\bigr)\mathcal H(\mathbf u^n)
+O(\Delta t^3).
\]
Invert the operator and expand to first order in \(\Delta t\):
\[
\left(I-\frac{\Delta t}{2}\mathcal G_{\mathbf u}(\mathbf u^n)\right)^{-1}
=
I+\frac{\Delta t}{2}\mathcal G_{\mathbf u}(\mathbf u^n)+O(\Delta t^2).
\]
Thus
\begin{align}
\delta
&=
\left(I+\frac{\Delta t}{2}\mathcal G_{\mathbf u}(\mathbf u^n)\right)
\left[
\frac{\Delta t}{2}\mathcal H(\mathbf u^n)
+\frac{\Delta t^2}{8}\bigl(\mathcal F_{\mathbf u}(\mathbf u^n)-\mathcal G_{\mathbf u}(\mathbf u^n)\bigr)\mathcal H(\mathbf u^n)
\right]
+O(\Delta t^3) \nonumber \\
&=
\frac{\Delta t}{2}\mathcal H(\mathbf u^n)
+\frac{\Delta t^2}{4}\mathcal G_{\mathbf u}(\mathbf u^n)\mathcal H(\mathbf u^n)
+\frac{\Delta t^2}{8}\bigl(\mathcal F_{\mathbf u}(\mathbf u^n)-\mathcal G_{\mathbf u}(\mathbf u^n)\bigr)\mathcal H(\mathbf u^n)
+O(\Delta t^3).
\label{derivative3}
\end{align}
The \(\Delta t^2\) terms in \eqref{derivative3} combine as
\[
\frac{\Delta t^2}{4}\mathcal G_{\mathbf u}(\mathbf u^n)
+\frac{\Delta t^2}{8}\bigl(\mathcal F_{\mathbf u}(\mathbf u^n)-\mathcal G_{\mathbf u}(\mathbf u^n)\bigr)
=
\frac{\Delta t^2}{8}\bigl(\mathcal F_{\mathbf u}(\mathbf u^n)+\mathcal G_{\mathbf u}(\mathbf u^n)\bigr)
=
\frac{\Delta t^2}{8}\mathcal H_{\mathbf u}(\mathbf u^n),
\]
because \(\mathcal H_{\mathbf u}=\mathcal F_{\mathbf u}+\mathcal G_{\mathbf u}\). Hence
\[
\delta
=
\frac{\Delta t}{2}\mathcal H(\mathbf u^n)
+\frac{\Delta t^2}{8}\mathcal H_{\mathbf u}(\mathbf u^n)\mathcal H(\mathbf u^n)
+O(\Delta t^3).
\]
Recalling \(\delta=\mathbf u^*-\mathbf u^n\), we obtain
\begin{equation}
\mathbf u^*
=
\mathbf u^n
+
\frac{\Delta t}{2}\mathcal H(\mathbf u^n)
+
\frac{\Delta t^2}{8}
\mathcal H_{\mathbf u}(\mathbf u^n)\mathcal H(\mathbf u^n)
+
O(\Delta t^3),
\label{mid_star}
\end{equation}
which is exactly the desired expansion.

Now, to relate this to the exact solution, expand \(\mathbf u(t)\) about \(t_n\):
\[
\mathbf u\left(t_n+\frac{\Delta t}{2}\right)
=
\mathbf u^n
+\frac{\Delta t}{2}\dot{\mathbf u}^n
+\frac{\Delta t^2}{8}\ddot{\mathbf u}^n
+O(\Delta t^3).
\]
Using the differential equation \(\dot{\mathbf u}=\mathcal H(\mathbf u)\) and the chain rule \(\ddot{\mathbf u} = \mathcal H_{\mathbf u}(\mathbf u)\mathcal H(\mathbf u)\), we get
\[
\mathbf u\left(t_n+\frac{\Delta t}{2}\right)
=
\mathbf u^n
+\frac{\Delta t}{2}\mathcal H(\mathbf u^n)
+\frac{\Delta t^2}{8}\mathcal H_{\mathbf u}(\mathbf u^n)\mathcal H(\mathbf u^n)
+O(\Delta t^3).
\]
Comparing this with \eqref{mid_star}, we conclude that
\[
\mathbf u^*
=
\mathbf u\left(t_n+\frac{\Delta t}{2}\right)
+
\mathcal O(\Delta t^3)
\]
as claimed.
\end{proof}

\begin{lemma}
\label{lem:midpoint_replacement}
Let \(\mathcal F\) and \(\mathcal G\) be sufficiently smooth, and assume that the stage \(\mathbf U^*\) satisfies the midpoint error estimate \eqref{eq:midpoint_approx}.
Then the Hermite-type integral approximations using \(\mathbf U^*\) in place of the exact midpoint are
\begin{align}
\int_{t_n}^{t_{n+1}} \mathcal F(\mathbf u(t))\,dt
&=
\Delta t\,\mathcal F(\mathbf u^n)
+\frac{\Delta t^2}{6}
\left[
\dot{\mathcal F}(\mathbf u^n)+2\dot{\mathcal F}(\mathbf u^*)
\right]
+\mathcal O(\Delta t^5), \label{eq:Fint_star} \\
\int_{t_n}^{t_{n+1}} \mathcal G(\mathbf u(t))\,dt
&=
\Delta t\,\mathcal G(\mathbf u^{n+1})
-\frac{\Delta t^2}{6}
\left[
\dot{\mathcal G}(\mathbf u^{n+1})+2\dot{\mathcal G}(\mathbf u^*)
\right]
+\mathcal O(\Delta t^5). \label{eq:Gint_star}
\end{align}
Consequently, replacing the exact midpoint by \(\mathbf U^*\) in the numerical update \eqref{eq:update} introduces only an additional error of order \(\mathcal O(\Delta t^5)\).
\end{lemma}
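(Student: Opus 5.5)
The plan is to separate the two sources of error: first the exact Hermite-type quadrature rule with the true midpoint, then the perturbation from replacing \(\mathbf u(t_n+\Delta t/2)\) by \(\mathbf u^*\). Throughout, (\ref{eq:Gint_star}) is read in the local-truncation sense: \(\mathbf u^{n+1}\) on its right-hand side denotes the exact value \(\mathbf u(t_{n+1})\), just as \(\mathbf u^n=\mathbf u(t_n)\) in Lemma~\ref{lem:midpoint}. This is the only reading under which the statement is an identity about \(\int\mathcal G(\mathbf u(t))\,dt\).

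\textbf{Step 1: the quadrature rule.} Take a scalar or vector function \(f\in C^5\) on \([t_n,t_{n+1}]\) and set \(t=t_n+s\Delta t\). I would verify that
\[
Q_L[f]=\Delta t\,f(t_n)+\tfrac{\Delta t^2}{6}\bigl[\dot f(t_n)+2\dot f(t_n+\Delta t/2)\bigr]
\]
integrates \(1,s,s^2,s^3\) exactly. The checks give \(1=1\), \(\tfrac12=\tfrac16\cdot 3\), \(\tfrac13=\tfrac16\cdot 2\) and \(\tfrac14=\tfrac16\cdot 2\cdot\tfrac34\). For the quartic the rule gives \(\tfrac16\ne\tfrac15\), so it is exact precisely for cubics. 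Taylor expansion of \(f\) about \(t_n\) with remainder then gives \(\int f-Q_L[f]=\mathcal O(\Delta t^5)\), with constant proportional to \(\max|f^{(4)}|\).

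The companion rule
\[
Q_R[f]=\Delta t\,f(t_{n+1})-\tfrac{\Delta t^2}{6}\bigl[\dot f(t_{n+1})+2\dot f(t_n+\Delta t/2)\bigr]
\]
follows from the reflection \(t\mapsto t_n+t_{n+1}-t\). This reflection maps \(f\) to \(g(t)=f(t_n+t_{n+1}-t)\), which has \(\dot g=-\dot f\) at the reflected point and leaves the midpoint fixed. Applying \(Q_L\) to \(g\) therefore yields \(Q_R\) with the same \(\mathcal O(\Delta t^5)\) error.

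\textbf{Step 2: identify the derivatives along the trajectory.} Apply Step 1 to \(f(t)=\mathcal F(\mathbf u(t))\) and to \(f(t)=\mathcal G(\mathbf u(t))\). By the chain rule and \(\dot{\mathbf u}=\mathcal H(\mathbf u)\),
\[
\tfrac{d}{dt}\mathcal F(\mathbf u(t))=\mathcal F_{\mathbf u}\mathcal H=\dot{\mathcal F}(\mathbf u(t)),
\]
and likewise for \(\mathcal G\), by (\ref{eq:Fdot})--(\ref{eq:Gdot}). This is exactly where evaluating the time derivatives along the full field \(\mathcal H\), rather than only \(\mathcal F_{\mathbf u}\mathcal F\) or \(\mathcal G_{\mathbf u}\mathcal G\), is needed. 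Smoothness of \(\mathcal F,\mathcal G\) makes these \(f\) of class \(C^5\). The result is (\ref{eq:Fint_star})--(\ref{eq:Gint_star}) with \(\mathbf u(t_n+\Delta t/2)\) in place of \(\mathbf u^*\).

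\textbf{Step 3: midpoint replacement.} Since \(\dot{\mathcal F}\) and \(\dot{\mathcal G}\) are \(C^1\), they are locally Lipschitz near the trajectory. Combining this with (\ref{eq:midpoint_approx}) from Lemma~\ref{lem:midpoint} gives
\[
\dot{\mathcal F}(\mathbf u^*)-\dot{\mathcal F}\bigl(\mathbf u(t_n+\tfrac{\Delta t}{2})\bigr)=\mathcal O(\Delta t^3),
\]
and the same bound for \(\dot{\mathcal G}\). This perturbation is multiplied by \(2\Delta t^2/6\), so it contributes \(\mathcal O(\Delta t^5)\). Together with Step 2 this gives both formulas and the final consequence about the update (\ref{eq:update}).

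\textbf{Main obstacle.} The main difficulty is not the algebra but the constants. The \(\mathcal O\)-bounds use derivatives and Lipschitz constants of \(\mathcal G\) and \(\dot{\mathcal G}\), which grow with stiffness. The argument therefore only establishes classical non-stiff local accuracy, with constants depending on \(\mathcal G\). I would state this explicitly, and also restate the interpretation of \(\mathbf u^{n+1}\) as the exact value.
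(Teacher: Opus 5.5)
Your proof is correct. At the level of the quadrature rule it is the paper's argument in different form. Your moment check on $1,s,s^2,s^3$ together with the Taylor remainder is equivalent to the paper's term-by-term matching of $\int_0^h f$ against $hf(0)+\frac{h^2}{6}[\dot f(0)+2\dot f(h/2)]$. That matching leaves the defect $\frac{h^5}{720}f^{(4)}(0)$, which agrees with your $\frac16\neq\frac15$ for the quartic.

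Two things differ. First, the paper obtains the right-endpoint rule by a second, separate expansion of $g$ about $t=h$. Your reflection $t\mapsto t_n+t_{n+1}-t$ gets it directly from the left-endpoint rule with the same constant.

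Second, and more substantively, the paper's written proof stops once the rules are established with $\dot f(h/2)=\dot{\mathcal F}(\mathbf u(t_n+h/2))$. It never actually replaces the exact midpoint by $\mathbf u^*$. That step is justified only informally in the prose of Section~3, via the remark that an $\mathcal O(\Delta t^3)$ midpoint error is multiplied by $\Delta t^2/6$. Your Steps~2 and~3 supply exactly what the statement asserts and the paper leaves implicit: the chain-rule identification $\frac{d}{dt}\mathcal F(\mathbf u(t))=\dot{\mathcal F}(\mathbf u(t))$, and the local Lipschitz bound on $\dot{\mathcal F},\dot{\mathcal G}$ combined with \eqref{eq:midpoint_approx}. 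So your version is the more complete proof of the lemma as stated.

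Your reading of $\mathbf u^{n+1}$ in \eqref{eq:Gint_star} as $\mathbf u(t_{n+1})$ is the one the paper tacitly uses, since it writes $g(h)=\mathcal G(\mathbf u(h))$. It is also the form invoked in the theorem's proof via \eqref{eq:G_int_star}. Calling it the \emph{only} admissible reading is slightly too strong. Once the theorem gives $\mathbf u^{n+1}-\mathbf u(t_{n+1})=\mathcal O(\Delta t^5)$, the numerical-value version also holds, but deducing it that way would be circular, because the theorem rests on this lemma.

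Your caveat that all $\mathcal O$-constants depend on derivatives and Lipschitz constants of $\mathcal G$ and $\dot{\mathcal G}$ is accurate. It applies equally to the paper's argument.
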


\begin{proof}
Let \(h = \Delta t\). For notational simplicity, set \(t_n = 0\) and define
\[
f(t) := \mathcal F(\mathbf u(t)), \qquad g(t) := \mathcal G(\mathbf u(t)),
\]
where \(f\) and \(g\) are assumed to be sufficiently smooth.

\medskip
\noindent\textbf{First integral formula.}
We prove
\[
\int_0^h f(t)\,dt
=
h f(0)+\frac{h^2}{6}\bigl[\dot f(0)+2\dot f(h/2)\bigr]+O(h^5).
\]
Expanding \(f(t)\) about \(t=0\) up to the \(h^4\) term gives
\[
f(t)=f(0)+t\dot f(0)+\frac{t^2}{2}\ddot f(0)+\frac{t^3}{6}f^{(3)}(0)+\frac{t^4}{24}f^{(4)}(0)+O(t^5).
\]
Integrating term by term yields the exact value
\begin{equation}
\int_0^h f(t)\,dt
=
h f(0)+\frac{h^2}{2}\dot f(0)+\frac{h^3}{6}\ddot f(0)+\frac{h^4}{24}f^{(3)}(0)+\frac{h^5}{120}f^{(4)}(0)+O(h^6).
\label{star-1}
\end{equation}
On the other hand, expanding \(\dot f(h/2)\) about \(0\) gives
\[
\dot f(h/2)=\dot f(0)+\frac{h}{2}\ddot f(0)+\frac{h^2}{8}f^{(3)}(0)+\frac{h^3}{48}f^{(4)}(0)+O(h^4).
\]
Hence
\[
\dot f(0)+2\dot f(h/2)
=
3\dot f(0)+h\ddot f(0)+\frac{h^2}{4}f^{(3)}(0)+\frac{h^3}{24}f^{(4)}(0)+O(h^4).
\]
Multiplying by \(h^2/6\), we obtain
\[
\frac{h^2}{6}\bigl[\dot f(0)+2\dot f(h/2)\bigr]
=
\frac{h^2}{2}\dot f(0)+\frac{h^3}{6}\ddot f(0)+\frac{h^4}{24}f^{(3)}(0)+\frac{h^5}{144}f^{(4)}(0)+O(h^6).
\]
Adding \(h f(0)\) to this expression and comparing with \eqref{star-1}, all terms up to \(O(h^4)\) match exactly; the \(h^5\) term differs by
\[
\frac{h^5}{120}f^{(4)}(0)-\frac{h^5}{144}f^{(4)}(0)=\frac{h^5}{720}f^{(4)}(0)=O(h^5).
\]
Thus the first integral formula holds.

\medskip
\noindent\textbf{Second integral formula.}
We prove
\[
\int_0^h g(t)\,dt
=
h g(h)-\frac{h^2}{6}\bigl[\dot g(h)+2\dot g(h/2)\bigr]+O(h^5).
\]
Expanding \(g(t)\) about \(t=h\) yields
\[
g(t)=g(h)+(t-h)\dot g(h)+\frac{(t-h)^2}{2}\ddot g(h)+\frac{(t-h)^3}{6}g^{(3)}(h)+\frac{(t-h)^4}{24}g^{(4)}(h)+O((t-h)^5).
\]
Integrating (using \(\int_0^h (t-h)^k dt = (-1)^k h^{k+1}/(k+1)\)), we get
\begin{equation}
\int_0^h g(t)\,dt
=
h g(h)-\frac{h^2}{2}\dot g(h)+\frac{h^3}{6}\ddot g(h)-\frac{h^4}{24}g^{(3)}(h)+\frac{h^5}{120}g^{(4)}(h)+O(h^6).
\label{star-2}
\end{equation}
Now expand \(\dot g(h/2)\) about \(h\):
\[
\dot g(h/2)=\dot g(h)-\frac{h}{2}\ddot g(h)+\frac{h^2}{8}g^{(3)}(h)-\frac{h^3}{48}g^{(4)}(h)+O(h^4).
\]
Therefore,
\[
\dot g(h)+2\dot g(h/2)
=
3\dot g(h)-h\ddot g(h)+\frac{h^2}{4}g^{(3)}(h)-\frac{h^3}{24}g^{(4)}(h)+O(h^4).
\]
Multiplying by \(-h^2/6\), we obtain
\[
-\frac{h^2}{6}\bigl[\dot g(h)+2\dot g(h/2)\bigr]
=
-\frac{h^2}{2}\dot g(h)+\frac{h^3}{6}\ddot g(h)-\frac{h^4}{24}g^{(3)}(h)+\frac{h^5}{144}g^{(4)}(h)+O(h^6).
\]
Adding \(h g(h)\) to this and comparing with \eqref{star-2}, all lower-order terms coincide; the \(h^5\) term again differs by
\[
\frac{h^5}{120}g^{(4)}(h)-\frac{h^5}{144}g^{(4)}(h)=\frac{h^5}{720}g^{(4)}(h)=O(h^5).
\]
Hence the second integral formula is also valid.

Translating \(t_n\) back to a general value completes the proof of both approximations.
\end{proof}

\begin{theorem}
Assume that the exact solution and the operators $\mathcal{F}$ and $\mathcal{G}$ are sufficiently smooth. If $\dot{\mathcal{F}}$ and $\dot{\mathcal{G}}$ are evaluated along the full vector field $\mathcal{F}+\mathcal{G}$, then the method \eqref{eq:stage}--\eqref{eq:update} is fourth-order accurate in time.
\end{theorem}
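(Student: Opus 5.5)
The plan is the standard route for one-step methods. First I show that the local truncation error is $\mathcal O(\Delta t^5)$ when $\mathbf u^n=\mathbf u(t_n)$ is exact. Then I pass to the global error $\mathcal O(\Delta t^4)$ through a Lipschitz stability argument for the one-step map. I work with the Hermite identities of Lemma~\ref{lem:midpoint_replacement} and the stage estimate of Lemma~\ref{lem:midpoint}. I treat $\mathcal F,\mathcal G$ and their Jacobians as bounded and smooth on a neighbourhood of the trajectory. The constants may depend on $\mathcal G_{\mathbf u}$, so this is a non-stiff, classical order statement.

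\textbf{Step 1 (quadrature with the exact endpoint).} I write $\mathbf u(t_{n+1})=\mathbf u^n+\int_{t_n}^{t_{n+1}}(\mathcal F+\mathcal G)(\mathbf u(t))\,dt$. I insert \eqref{eq:Fint_star} and \eqref{eq:Gint_star}, first with the exact midpoint $\mathbf u(t_n+\Delta t/2)$, as proved in Lemma~\ref{lem:midpoint_replacement}. Along the exact solution, $\frac{d}{dt}\mathcal F(\mathbf u(t))=\mathcal F_{\mathbf u}\mathcal H=\dot{\mathcal F}(\mathbf u(t))$, and likewise for $\mathcal G$. This is exactly where the hypothesis that $\dot{\mathcal F},\dot{\mathcal G}$ are formed with the full field $\mathcal H$ enters. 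If only $\mathcal F_{\mathbf u}\mathcal F$ and $\mathcal G_{\mathbf u}\mathcal G$ were used, the quadrature would already fail at order $\Delta t^2$ because of the mixed terms in \eqref{eq:mixed_terms}.

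Next I replace the exact midpoint by $\mathbf u^*$. By Lemma~\ref{lem:midpoint}, $\mathbf u^*-\mathbf u(t_n+\Delta t/2)=\mathcal O(\Delta t^3)$. Since $\dot{\mathcal F}$ and $\dot{\mathcal G}$ are Lipschitz, and these values carry the factor $\Delta t^2/6$, the replacement costs $\mathcal O(\Delta t^5)$. Hence the exact value $\mathbf w:=\mathbf u(t_{n+1})$ satisfies
\[
\Phi(\mathbf w)=\mathbf r_n+\boldsymbol\tau_n,\qquad \boldsymbol\tau_n=\mathcal O(\Delta t^5).
\]
Here
\[
\Phi(\mathbf v):=\mathbf v-\Delta t\,\mathcal G(\mathbf v)+\frac{\Delta t^2}{6}\dot{\mathcal G}(\mathbf v),
\]
and $\mathbf r_n$ collects the explicit and stage terms of \eqref{eq:update}, which are the same for the exact and the numerical value.

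\textbf{Step 2 (from residual to local error).} The numerical value satisfies $\Phi(\mathbf u^{n+1})=\mathbf r_n$ exactly. Subtracting the two relations and using the mean value theorem gives
\[
\Bigl(\int_0^1\Phi'(\mathbf u^{n+1}+s(\mathbf w-\mathbf u^{n+1}))\,ds\Bigr)(\mathbf w-\mathbf u^{n+1})=\boldsymbol\tau_n .
\]
Since $\Phi'=I-\Delta t\,\mathcal G_{\mathbf u}+\mathcal O(\Delta t^2)$, this operator is invertible with a uniformly bounded inverse for $\Delta t$ small. Therefore $\mathbf u^{n+1}-\mathbf u(t_{n+1})=\mathcal O(\Delta t^5)$. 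The same implicit-function argument applied to the stage map \eqref{eq:stage} gives existence and uniqueness of $\mathbf u^*$ near $\mathbf u^n$; this is also tacitly needed in Lemma~\ref{lem:midpoint}.

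\textbf{Step 3 (global error).} I show that the increment map $\mathbf u^n\mapsto\mathbf u^{n+1}$ has the form $\mathbf u^n+\Delta t\,\Psi(\mathbf u^n,\Delta t)$ with $\Psi$ Lipschitz in $\mathbf u^n$. This follows from the implicit function theorem applied jointly to \eqref{eq:stage} and \eqref{eq:update}, because both implicit maps are $I+\mathcal O(\Delta t)$ perturbations. The standard Lady Windermere's fan / discrete Gronwall argument then sums $N=T/\Delta t$ local errors of size $\mathcal O(\Delta t^5)$, which yields a global error of $\mathcal O(\Delta t^4)$.

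I expect the main obstacle to be Step 2 and the Lipschitz bound in Step 3. The point is that Lemma~\ref{lem:midpoint_replacement} is stated in terms of $\mathcal G(\mathbf u(t_{n+1}))$, while the scheme evaluates $\mathcal G$ at the unknown numerical $\mathbf u^{n+1}$. Closing this gap requires uniform invertibility of $\Phi'$, and hence a step restriction $\Delta t\,\|\mathcal G_{\mathbf u}\|$ small, or else a separate stiff analysis. I will state this restriction explicitly, so that the theorem is read as a classical non-stiff order result.
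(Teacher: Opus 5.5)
Your proposal is correct and takes essentially the paper's route: you use the Hermite identities along the exact trajectory, which is where the full-field definition of $\dot{\mathcal F},\dot{\mathcal G}$ enters, and you replace the midpoint by $\mathbf u^*$ at cost $\mathcal O(\Delta t^5)$ via Lemma~\ref{lem:midpoint}; you then absorb the implicit endpoint mismatch $\mathcal G(\mathbf u^{n+1})-\mathcal G(\mathbf u(t_{n+1}))$ through uniform invertibility of $\Phi'$, which is equivalent to the paper's bound with Lipschitz constants $L_G, L_{\dot G}$. The one real difference is your Step~3, which is more careful than the paper: the paper simply adds the local errors $\|\mathbf e^{j+1}\|$ without accounting for how earlier errors propagate through the one-step map, whereas your Lipschitz increment function with Lady Windermere's fan (and your implicit-function argument for existence of $\mathbf u^*$ and $\mathbf u^{n+1}$) supplies the stability ingredient that the paper's summation tacitly assumes.
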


\begin{proof}
Let \(\mathbf u(t)\) denote the exact solution of
\begin{equation*}
\dot{\mathbf u}(t)=\mathcal H(\mathbf u(t))=\mathcal F(\mathbf u(t))+\mathcal G(\mathbf u(t)),\qquad \mathbf u(t_n)=\mathbf u^n,
\label{eq:ode}
\end{equation*}
and let \(\mathbf u^{n+1}\) be the numerical solution obtained from \eqref{eq:update}. Define the local truncation error
\begin{equation*}
\mathbf e^{n+1}:=\mathbf u^{n+1}-\mathbf u(t_{n+1}).
\label{eq:local_error_def}
\end{equation*}
We will show that \(\mathbf e^{n+1}=\mathcal O(\Delta t^5)\), which implies fourth-order global accuracy by standard error accumulation.

From \textbf{Lemma 1}, the intermediate stage satisfies
\begin{equation*}
\mathbf u^* = \mathbf u\left(t_n+\frac{\Delta t}{2}\right) + \mathcal O(\Delta t^3).
\label{eq:Ustar_mid}
\end{equation*}

By \textbf{Lemma 2}, the Hermite-type quadrature formulas with \(\mathbf U^*\) in place of the exact midpoint are fifth-order accurate. Hence, for the exact solution,
\begin{equation}
\int_{t_n}^{t_{n+1}}\mathcal F(\mathbf u(t))\,dt
=
\Delta t\,\mathcal F(\mathbf u^n)
+\frac{\Delta t^2}{6}
\Bigl[\dot{\mathcal F}(\mathbf u^n)+2\dot{\mathcal F}(\mathbf u^*)\Bigr]
+\mathcal O(\Delta t^5),
\label{eq:F_int_star}
\end{equation}
and
\begin{equation}
\int_{t_n}^{t_{n+1}}\mathcal G(\mathbf u(t))\,dt
=
\Delta t\,\mathcal G(\mathbf u(t_{n+1}))
-\frac{\Delta t^2}{6}
\Bigl[\dot{\mathcal G}(\mathbf u(t_{n+1}))+2\dot{\mathcal G}(\mathbf u^*)\Bigr]
+\mathcal O(\Delta t^5).
\label{eq:G_int_star}
\end{equation}

Adding \eqref{eq:F_int_star} and \eqref{eq:G_int_star} and using
\begin{equation*}
\mathbf u(t_{n+1})-\mathbf u^n
=
\int_{t_n}^{t_{n+1}}\bigl(\mathcal F(\mathbf u(t))+\mathcal G(\mathbf u(t))\bigr)\,dt,
\label{eq:integral_id}
\end{equation*}
we obtain the exact relation
\begin{equation}
\begin{aligned}
\mathbf u(t_{n+1})
&=
\mathbf u^n
+\Delta t\,\mathcal F(\mathbf u^n)
+\frac{\Delta t^2}{6}\dot{\mathcal F}(\mathbf u^n)
+\frac{\Delta t^2}{3}\dot{\mathcal F}(\mathbf u^*) \\
&\quad +\Delta t\,\mathcal G(\mathbf u(t_{n+1}))
-\frac{\Delta t^2}{6}\dot{\mathcal G}(\mathbf u(t_{n+1}))
-\frac{\Delta t^2}{3}\dot{\mathcal G}(\mathbf u^*)
+\mathcal O(\Delta t^5).
\end{aligned}
\label{eq:combined_exact}
\end{equation}

Now subtract the numerical update \eqref{eq:update}, which is
\begin{equation*}
\begin{aligned}
\mathbf u^{n+1}
&=
\mathbf u^n
+\Delta t\,\mathcal F(\mathbf u^n)
+\frac{\Delta t^2}{6}\dot{\mathcal F}(\mathbf u^n)
+\frac{\Delta t^2}{3}\dot{\mathcal F}(\mathbf u^*) \\
&\quad +\Delta t\,\mathcal G(\mathbf u^{n+1})
-\frac{\Delta t^2}{6}\dot{\mathcal G}(\mathbf u^{n+1})
-\frac{\Delta t^2}{3}\dot{\mathcal G}(\mathbf u^*),
\end{aligned}
\label{eq:num_update}
\end{equation*}
from \eqref{eq:combined_exact}. Since \(\mathbf u^n=\mathbf u(t_n)\), the \(\mathcal F\)-terms cancel exactly, and the \(\mathbf U^*\)-terms also cancel exactly because they appear identically in both expressions. Thus we obtain
\begin{equation}
\mathbf e^{n+1}
=
\Delta t\Bigl[\mathcal G(\mathbf u^{n+1})-\mathcal G(\mathbf u(t_{n+1}))\Bigr]
-\frac{\Delta t^2}{6}
\Bigl[\dot{\mathcal G}(\mathbf u^{n+1})-\dot{\mathcal G}(\mathbf u(t_{n+1}))\Bigr]
+\mathcal O(\Delta t^5).
\label{eq:error_final}
\end{equation}
The remaining \(\mathcal O(\Delta t^5)\) term comes solely from the quadrature errors in \eqref{eq:F_int_star} and \eqref{eq:G_int_star}; no further expansion of \(\mathbf u^*\) is needed.

Since \(\mathcal G\) and \(\dot{\mathcal G}\) are smooth, they are Lipschitz continuous on compact sets. Let \(L_G\) and \(L_{\dot G}\) denote their Lipschitz constants. Taking norms in \eqref{eq:error_final} gives
\begin{equation*}
\|\mathbf e^{n+1}\|
\le
\Delta t\,L_G\,\|\mathbf e^{n+1}\|
+ \frac{\Delta t^2}{6}L_{\dot G}\,\|\mathbf e^{n+1}\|
+ C\Delta t^5,
\label{eq:norm_ineq}
\end{equation*}
for some constant \(C>0\) independent of \(\Delta t\). Rearranging,
\begin{equation*}
\left(1-\Delta t L_G-\frac{\Delta t^2}{6}L_{\dot G}\right)\|\mathbf e^{n+1}\| \le C\Delta t^5.
\label{eq:final_ineq}
\end{equation*}
For sufficiently small \(\Delta t\), the prefactor is positive and bounded away from zero. Hence there exists \(C'>0\) such that
\begin{equation*}
\|\mathbf e^{n+1}\| \le C' \Delta t^5.
\label{eq:local_error_bound}
\end{equation*}
Thus the local truncation error is \(\mathcal O(\Delta t^5)\).

Finally, over a fixed time interval \([0,T]\) with \(N=T/\Delta t\) steps, the global error satisfies
\begin{equation*}
\|\mathbf u^N-\mathbf u(T)\|
\le
\sum_{j=0}^{N-1} \|\mathbf e^{j+1}\|
\le
N\cdot C' \Delta t^5
=
\frac{T}{\Delta t} C' \Delta t^5
=
\mathcal O(\Delta t^4).
\label{eq:global_error}
\end{equation*}
Therefore the method is fourth-order accurate in time.
\end{proof}

\section{Linear stability analysis}
\label{sec:stability}

To rigorously assess the robustness of the split two-derivative scheme, we investigate its stability properties on the standard scalar test equation
\begin{equation}
    u_t = \lambda_E u + \lambda_I u, \qquad \lambda_E,\lambda_I\in\mathbb{C},
\end{equation}
where the explicit and implicit split components are identified with $\mathcal{F}=\lambda_E u$ and $\mathcal{G}=\lambda_I u$, respectively. We define the complex parameters
\[
    z_E = \lambda_E \Delta t, \qquad z_I = \lambda_I \Delta t,
\]
and analyze the stability function $R(z_E,z_I)$ for various limiting regimes. In the following, we set $\Re(z)\le 0$ for the implicit component to correspond to the stable, stiff region.

\subsection{Pure implicit reduction ($\mathcal{F}=0$)}

When the explicit component vanishes, the method reduces to a fully implicit two-derivative scheme. Applying the method to $u_t=\lambda_I u$ yields the stability function
\begin{equation}
    R_I(z_I)
    =
    \frac{-2(5z_I^2+12z_I-24)}{(z_I^2-6z_I+6)(z_I^2-4z_I+8)}.
    \label{eq:pure_implicit_stability}
\end{equation}
The denominator factors as
\[
    (z_I^2-6z_I+6)(z_I^2-4z_I+8)
    =
    \prod_{j=1}^4 (z_I - p_j),
\]
with poles
\[
    p_1 = 3-\sqrt{3},\quad p_2 = 3+\sqrt{3},\quad p_3 = 2+2\mathrm{i},\quad p_4 = 2-2\mathrm{i}.
\]
Since $\Re(p_j)>0$ for all $j$, $R_I(z_I)$ is analytic in the left half-plane $\Re(z_I)\le 0$. Moreover, direct evaluation gives
\[
    R_I(0)=1,\qquad \lim_{|z_I|\to\infty} R_I(z_I) = 0,
\]
so the implicit substructure is both A-stable and L-stable. In particular, it provides strong damping of highly oscillatory or stiff implicit modes.

\subsection{Pure explicit reduction ($\mathcal{G}=0$)}

In the absence of implicit terms, the two-derivative construction collapses to the standard fourth-order Taylor polynomial. The corresponding explicit stability function is
\begin{equation*}
    R_E(z_E) = 1 + z_E + \frac{1}{2}z_E^2 + \frac{1}{6}z_E^3 + \frac{1}{24}z_E^4,
\end{equation*}
which is the stability polynomial of the classical explicit fourth-order Runge--Kutta method. The stability region $\mathcal{S}_E = \{z_E\in\mathbb{C}: |R_E(z_E)|\le 1\}$ is bounded. Along the negative real axis, the boundary is attained at $r_E \approx -2.785$, i.e.
\[
    |R_E(-r_E)| = 1, \qquad r_E \in \mathbb{R}^+.
\]
Consequently, the explicit part imposes a finite Courant--Friedrichs--Lewy (CFL) restriction of the form $\Delta t \lesssim r_E / |\lambda_E|$ when the implicit part is inactive.

\subsection{Fully split case ($\mathcal{F}\neq 0,\ \mathcal{G}\neq 0$)}

For the general split test equation, let $s = z_E + z_I$. The stage amplification factor is defined by
\begin{equation*}
    Q(z_E,z_I)
    =
    \frac{1+\frac{1}{2}z_E+\frac{1}{8}z_Es}{1-\frac{1}{2}z_I+\frac{1}{8}z_Is},
\end{equation*}
and the one-step stability function reads
\begin{equation}
    R(z_E,z_I)
    =
    \frac{1+z_E+\frac{1}{6}z_Es+\frac{1}{3}(z_E-z_I)sQ(z_E,z_I)}{1-z_I+\frac{1}{6}z_Is}.
    \label{eq:full_stability}
\end{equation}
This bivariate rational function governs the linear stability of the coupled explicit--implicit scheme.


To further illustrate the coupled stability behavior, Fig.~\ref{fig:real_axis_stability_slice} presents a real-axis stability slice of the bivariate stability function \(R(z_E,z_I)\). In this plot, both stability variables are restricted to the non-positive real axis, namely \(z_E\le 0\) and \(z_I\le 0\). The shaded region represents the set of real pairs satisfying \(|R(z_E,z_I)|\le 1\). The figure shows that the method retains the explicit stability restriction when the implicit component is absent, while it exhibits stronger damping as the implicit component moves into the strongly stiff negative real regime. In particular, for a fixed explicit component \(z_E\), the stability factor decreases as \(|z_I|\) becomes large, which is consistent with the asymptotic behavior \(R(z_E,z_I)\sim -z_E/z_I\) as \(|z_I|\to\infty\). Therefore, this plot supports the stiff-decay property in the pure implicit limit and in the strong implicit-stiffness limit with a fixed explicit component. It should not be interpreted as a complete characterization of the full complex two-variable stability region.

\begin{figure}[htbp]
    \centering
\includegraphics[width=0.72\textwidth]{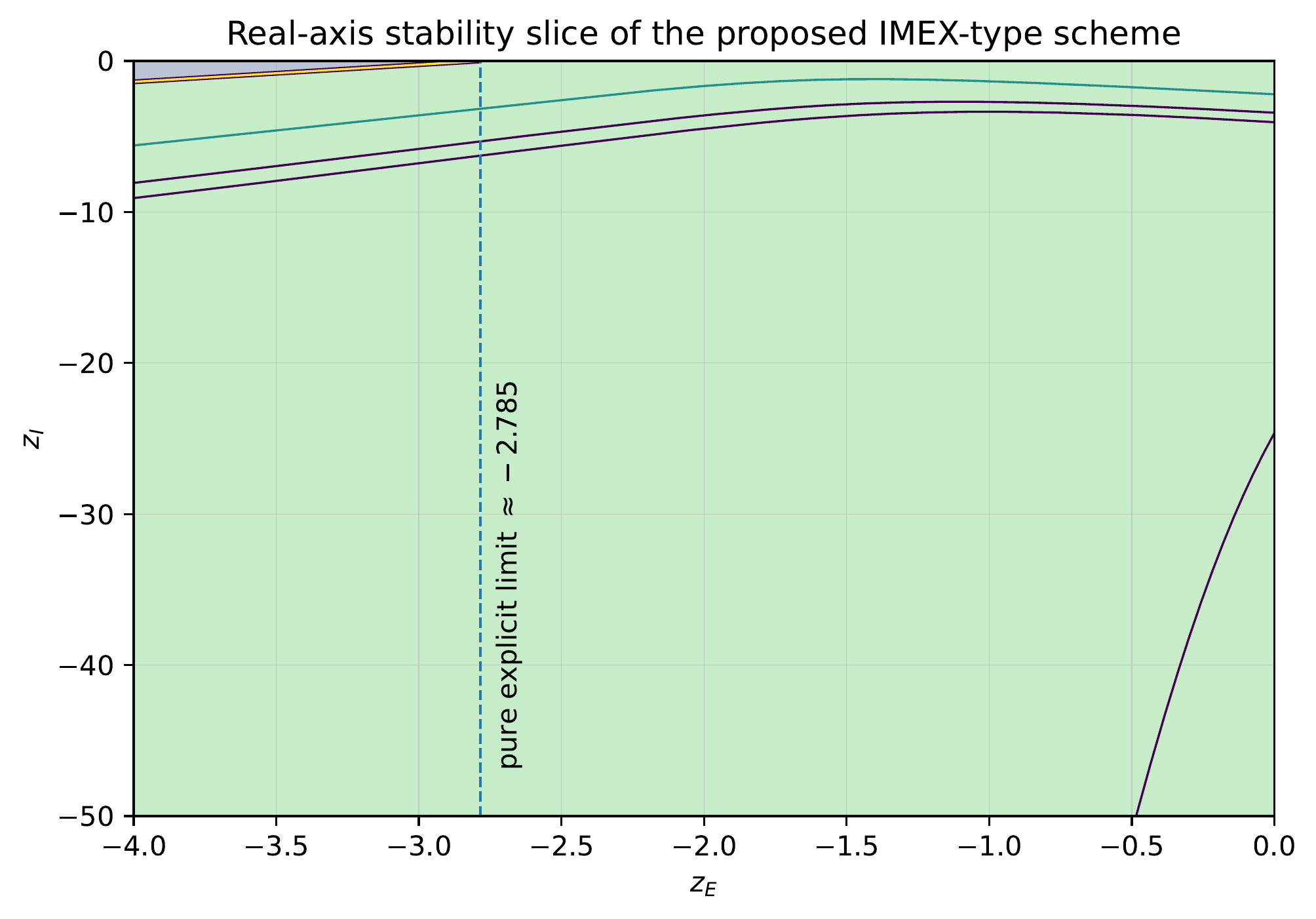}

\caption{Real-axis stability slice of the proposed IMEX-type two-derivative scheme. The shaded region denotes the set of non-positive real pairs \((z_E,z_I)\) satisfying \(|R(z_E,z_I)|\le 1\), and the solid contour represents the boundary \(|R(z_E,z_I)|=1\). The dashed vertical line indicates the pure explicit stability limit on the negative real axis. The plot illustrates that the method retains the explicit stability restriction while exhibiting stiff decay as the implicit component becomes increasingly stiff. This figure is only a real-axis slice and is not intended as a complete characterization of the full complex two-variable stability region.}
    \label{fig:real_axis_stability_slice}
\end{figure}

To characterize the interplay between the two split components, we examine the asymptotic behavior of $R(z_E,z_I)$ in the stiff limit.

\begin{enumerate}
    \item \textit{Fixed explicit part, stiff implicit part.} 
    For any fixed $z_E\in\mathbb{C}$ and $|z_I|\to\infty$ with $\Re(z_I)\le 0$, we have $s\sim z_I$. A direct asymptotic expansion of (\ref{eq:full_stability}) yields
    \[
        Q(z_E,z_I) \sim \frac{z_E}{z_I}, \qquad 
        R(z_E,z_I) \sim -\frac{z_E}{z_I} \to 0.
    \]
    Thus, the scheme remains L-stable with respect to the implicit component even in the presence of a bounded explicit contribution. The implicit part completely dampens the solution when the implicit time step tends to infinity.

    \item \textit{Simultaneously stiff explicit and implicit parts.} 
    Let both parameters grow unbounded with a fixed ratio, i.e. $z_E = c\, z_I$ and $|z_I|\to\infty$, where $c\in\mathbb{C}$ is a constant. Substituting $s=(1+c)z_I$ into (\ref{eq:full_stability}) and retaining the leading-order terms gives
    \begin{equation*}
        \lim_{|z_I|\to\infty} R(c\,z_I,z_I)
        =
        \frac{c(3c-1)}{1+c},
        \label{eq:double_stiff_limit}
    \end{equation*}
    provided $1+c\neq 0$. This asymptotic limit is generally nonzero. Consequently, stability in the doubly stiff regime requires
    \begin{equation*}
        \left|\frac{c(3c-1)}{1+c}\right| \le 1,
        \label{eq:double_stiff_condition}
    \end{equation*}
    which imposes a constraint on the relative magnitude of the explicit and implicit Courant numbers. This condition is nontrivial and must be respected when both components are evolved with large step sizes (e.g., in advection--diffusion problems where the explicit advection CFL number and the implicit diffusion number are both large).
\end{enumerate}

\section{Comparison with Classical Multi-Stage Fourth-Order IMEX--RK Schemes}
\label{sec:discussion_comparison}

The proposed method is not a classical one-derivative IMEX Runge--Kutta method. Classical multi-stage fourth-order IMEX--RK schemes achieve fourth-order accuracy by introducing several intermediate stages and by imposing explicit, implicit, and mixed coupling order conditions on the Butcher coefficients. In contrast, the present method is a two-derivative IMEX formulation. It uses not only the split vector fields \(\mathcal F\) and \(\mathcal G\), but also their temporal derivatives evaluated along the full vector field \(\mathcal F+\mathcal G\). This structural difference leads to three main advantages: mixed consistency at the derivative level, a compact two-stage fourth-order construction, and stronger stiff-mode damping in the strongly stiff regime.

\subsection{Mixed consistency at the derivative level}
\label{subsec:mixed_consistency}

Consider the additively split system \eqref{eq:split_system}
The exact second time derivative is
\begin{equation*}
\begin{aligned}
\mathbf u_{tt}
&=
\frac{d}{dt}
\left[
\mathcal F(\mathbf u)+\mathcal G(\mathbf u)
\right]  \\                                                 \
&=
\left(
\mathcal F_{\mathbf u}
+
\mathcal G_{\mathbf u}
\right)
\left(
\mathcal F+\mathcal G
\right)\\                                                   \
&=
\mathcal F_{\mathbf u}\mathcal F
+
\mathcal F_{\mathbf u}\mathcal G
+
\mathcal G_{\mathbf u}\mathcal F
+
\mathcal G_{\mathbf u}\mathcal G .
\end{aligned}
\label{eq:second_derivative_split}
\end{equation*}
Hence, the Taylor expansion of the exact solution contains not only the self-interaction terms
\(
\mathcal F_{\mathbf u}\mathcal F\;\text{and}\;
\mathcal G_{\mathbf u}\mathcal G,
\)
but also the mixed explicit--implicit interaction terms
\(
\mathcal F_{\mathbf u}\mathcal G
\;\text{and}\;
\mathcal G_{\mathbf u}\mathcal F.
\)
These mixed terms are essential when the explicit and implicit split components are coupled or non-commuting.

The proposed method incorporates these mixed interactions directly through the full directional temporal derivatives
\eqref{eq:Fdot} and \eqref{eq:Gdot}.
Expanding the two derivatives gives
\begin{equation*}
\dot{\mathcal F}
=
\mathcal F_{\mathbf u}\mathcal F
+
\mathcal F_{\mathbf u}\mathcal G,
\label{eq:expanded_F_derivative}
\end{equation*}
and
\begin{equation*}
\dot{\mathcal G}
=
\mathcal G_{\mathbf u}\mathcal F
+
\mathcal G_{\mathbf u}\mathcal G.
\label{eq:expanded_G_derivative}
\end{equation*}
Therefore, the mixed terms
\(
\mathcal F_{\mathbf u}\mathcal G
\;\text{and}\;
\mathcal G_{\mathbf u}\mathcal F
\)
are included analytically in the derivative evaluation itself. This means that mixed consistency is built into the formulation at the derivative level, rather than being imposed only through global algebraic order conditions.

This point can be seen more clearly for a linear split system
\begin{equation*}
\mathcal F(\mathbf u)=A\mathbf u,
\qquad
\mathcal G(\mathbf u)=B\mathbf u.
\label{eq:linear_split_comparison}
\end{equation*}
Then
\begin{equation*}
\dot{\mathcal F}
=
A(A+B)\mathbf u
=
A^2\mathbf u+AB\mathbf u,
\label{eq:Fdot_linear_split}
\end{equation*}
and
\begin{equation*}
\dot{\mathcal G}
=
B(A+B)\mathbf u
=
BA\mathbf u+B^2\mathbf u.
\label{eq:Gdot_linear_split}
\end{equation*}
Thus, the proposed two-derivative formulation explicitly contains both mixed matrix products
\(
AB\mathbf u
\;\text{and}\;
BA\mathbf u.
\)
If \(AB\neq BA\), these two mixed contributions are different and cannot be merged into a single commutative term. The exact second-order expansion satisfies
\begin{equation*}
(A+B)^2
=
A^2+AB+BA+B^2.
\label{eq:noncommuting_expansion}
\end{equation*}
Therefore, both \(AB\) and \(BA\) must be represented correctly in order to preserve high-order accuracy for non-commuting split systems.

Classical multi-stage IMEX--RK methods recover these mixed effects in a different way. A classical \(s\)-stage IMEX--RK method has the form
\eqref{eq:imex_rk_stage} and \eqref{eq:imex_rk_update}.
For fourth-order accuracy, classical multi-stage IMEX--RK methods must satisfy not only the usual explicit and implicit Runge--Kutta order conditions, but also mixed coupling conditions involving both Butcher tableaux, for example
\begin{equation*}
b^T A^E A^I c=1/24
\quad \text{and}\quad
b^T A^I A^E c=1/24
\end{equation*}
These algebraic coupling conditions are necessary to reproduce the correct mixed Taylor terms at the global truncation-error level. However, the mixed interactions are enforced indirectly through the coefficients of the method. In contrast, the proposed scheme incorporates the mixed terms directly through the full directional derivatives in \eqref{eq:Fdot} and \eqref{eq:Gdot}. This provides a more transparent mechanism for mixed consistency and explains why the proposed method maintains fourth-order accuracy in the non-commuting split tests.

\subsection{Compact stage structure and implicit-solve efficiency}
\label{subsec:compact_efficiency}

The proposed method achieves fourth-order accuracy through Hermite-type two-derivative quadrature rather than through many Runge--Kutta stages. For a sufficiently smooth function \(q(t)\), the following one-sided Hermite formula holds:
\begin{equation*}
\int_{t_n}^{t_{n+1}} q(t)\,dt
=
\Delta t\,q(t_n)
+
\frac{\Delta t^2}{6}
\left[
q'(t_n)
+
2q'\!\left(t_n+\frac{\Delta t}{2}\right)
\right]
+
\mathcal O(\Delta t^5).
\label{eq:left_hermite_formula_comparison}
\end{equation*}
Similarly, using the right endpoint gives
\begin{equation*}
\int_{t_n}^{t_{n+1}} q(t)\,dt
=
\Delta t\,q(t_{n+1})
-
\frac{\Delta t^2}{6}
\left[
q'(t_{n+1})
+
2q'\!\left(t_n+\frac{\Delta t}{2}\right)
\right]
+
\mathcal O(\Delta t^5).
\label{eq:right_hermite_formula_comparison}
\end{equation*}
The proposed scheme applies the left-endpoint formula to the explicit component and the right-endpoint formula to the implicit component. The intermediate value \(\mathbf u^*\) approximates the midpoint solution:
\begin{equation*}
\mathbf u^*
=
\mathbf u\!\left(t_n+\frac{\Delta t}{2}\right)
+
\mathcal O(\Delta t^3).
\label{eq:midpoint_approximation_comparison}
\end{equation*}
Since the midpoint derivative terms in the final update are multiplied by \(\Delta t^2\), this midpoint accuracy is sufficient to preserve a local truncation error of order
\begin{equation*}
\mathcal O(\Delta t^5),
\end{equation*}
and hence fourth-order global accuracy.

Therefore, the proposed method obtains fourth-order accuracy with only one intermediate stage and two implicit solves per time step. This is more compact than classical multi-stage fourth-order IMEX--RK schemes, which usually require several stages to satisfy all explicit, implicit, and mixed coupling order conditions.

The lower number of implicit stages also leads to a natural efficiency advantage when the comparison is normalized by the number of implicit solves. Let \(M\) be a fixed implicit-solve budget. Since the proposed method uses two implicit solves per time step, it can take approximately
\begin{equation*}
N_{\rm Pro}
\approx
\frac{M}{2}
\label{eq:nsteps_pro_comparison}
\end{equation*}
time steps. If the KC--ARK4 reference method requires \(s_I\) nontrivial implicit solves per time step, then it can take approximately
\begin{equation*}
N_{\rm IMEX}
\approx
\frac{M}{s_I}
\label{eq:nsteps_imex_comparison}
\end{equation*}
time steps. For fourth-order methods, the leading global errors behave as
\begin{equation*}
E_{\rm Pro}
\approx
C_{\rm Pro}
\left(
\frac{T}{N_{\rm Pro}}
\right)^4,
\qquad
E_{\rm IMEX}
\approx
C_{\rm IMEX}
\left(
\frac{T}{N_{\rm IMEX}}
\right)^4.
\label{eq:error_budget_comparison}
\end{equation*}
Thus,
\begin{equation*}
\frac{E_{\rm IMEX}}{E_{\rm Pro}}
\approx
\frac{C_{\rm IMEX}}{C_{\rm Pro}}
\left(
\frac{s_I}{2}
\right)^4.
\label{eq:error_ratio_budget_comparison}
\end{equation*}
For the KC--ARK4 reference method used in the numerical experiments, \(s_I=5\). Therefore, the stage-count factor alone is
\begin{equation*}
\left(
\frac{5}{2}
\right)^4
=
39.0625.
\label{eq:stage_count_factor_comparison}
\end{equation*}
This shows why, under the same implicit-solve budget, the proposed two-stage method can achieve higher accuracy per implicit solve, especially in stiff regimes where the stronger damping of fast components also contributes to the error reduction.

\subsection{Stronger stiff-mode damping}
\label{subsec:stiff_damping}


Consider the purely implicit scalar test equation
\begin{equation*}
{\bf u}_t=\lambda {\bf u},
\qquad
z=\lambda\Delta t,
\qquad
\operatorname{Re}(z)<0.
\label{eq:scalar_stiff_test_comparison}
\end{equation*}
In this case,
\begin{equation*}
\mathcal F=0,
\qquad
\mathcal G({\bf u})=\lambda {\bf u},
\qquad
\dot{\mathcal G}({\bf u})=\lambda^2{\bf u}.
\end{equation*}
The stage equation becomes
\begin{equation*}
{\bf u}^*
=
{\bf u}^n
+
\frac{z}{2}{\bf u}^*
-
\frac{z^2}{8}{\bf u}^*.
\label{eq:scalar_stage_comparison}
\end{equation*}
Hence,
\begin{equation}
{\bf u}^*
=
\frac{1}
{
1-\frac{z}{2}+\frac{z^2}{8}
}
{\bf u}^n.
\label{eq:scalar_stage_solution_comparison}
\end{equation}
The final update becomes
\begin{equation*}
{\bf u}^{n+1}
=
{\bf u}^n
+
z {\bf u}^{n+1}
-
\frac{z^2}{6}
\left(
{\bf u}^{n+1}+2{\bf u}^*
\right).
\label{eq:scalar_final_update_comparison}
\end{equation*}
Equivalently,
\begin{equation}
\left(
1-z+\frac{z^2}{6}
\right)
{\bf u}^{n+1}
=
{\bf u}^n
-
\frac{z^2}{3}{\bf u}^*.
\label{eq:scalar_final_update_rearranged_comparison}
\end{equation}
Substituting \eqref{eq:scalar_stage_solution_comparison} into \eqref{eq:scalar_final_update_rearranged_comparison} gives the stability function
\begin{equation*}
R_{\rm Pro}(z)
=
\frac{
1-\frac{z}{2}-\frac{5z^2}{24}
}{
\left(
1-\frac{z}{2}+\frac{z^2}{8}
\right)
\left(
1-z+\frac{z^2}{6}
\right)
}.
\label{eq:R_pro_comparison}
\end{equation*}
Therefore, as \(z\to-\infty\),
\begin{equation*}
R_{\rm Pro}(z)
=
-\frac{10}{z^2}
+
\mathcal O(z^{-3}).
\label{eq:R_pro_asymptotic_comparison}
\end{equation*}
This proves that the proposed method is L-stable and, more importantly, that its stiff-mode decay is of order
\begin{equation*}
R_{\rm Pro}(z)=\mathcal O(z^{-2}),
\qquad
z\to-\infty.
\label{eq:R_pro_order_comparison}
\end{equation*}

This asymptotic behavior of the amplification factor \(R_{\rm IMEX}(z)\) of the IMEX--RK scheme follows by expanding the rational stability function
\begin{equation*}
R_{\rm IMEX}(z)=1+z\,b^T(I-zA_I)^{-1}(I+zA_E)\mathbf{1}
\end{equation*}
for large \(|z|\). Using the Neumann-type expansion
\begin{equation*}
(I-zA_I)^{-1}=-z^{-1}A_I^{-1}-z^{-2}A_I^{-2}+\mathcal O(z^{-3}),
\end{equation*}
and invoking the RK order conditions to eliminate the constant term, the leading-order term reduces to \(\alpha/z\), with
\begin{equation*}
\alpha=-b^T A_I^{-1}(I+A_E)\mathbf{1}\neq 0
\end{equation*}
for the specific fourth-order IMEX scheme under consideration. 
For the KC--ARK4 reference method employed in the numerical experiments, the implicit stability factor therefore exhibits the large-\(|z|\) asymptotics
\begin{equation*}
R_{\rm IMEX}(z)=\frac{\alpha}{z}+\mathcal O(z^{-2}),\qquad z\to-\infty,
\label{eq:R_imex_asymptotic_comparison}
\end{equation*}
where \(\alpha\neq 0\).

Consequently, comparing the decay rates gives
\begin{equation*}
R_{\rm IMEX}(z)=\mathcal O(z^{-1}),\qquad R_{\rm Pro}(z)=\mathcal O(z^{-2}).
\label{eq:decay_order_comparison}
\end{equation*}
The relative damping advantage satisfies
\begin{equation*}
\frac{|R_{\rm IMEX}(z)|}{|R_{\rm Pro}(z)|}
=
\mathcal O(|z|),
\qquad
z\to-\infty.
\label{eq:relative_damping_comparison}
\end{equation*}
Thus, as the stiffness increases, the proposed method damps fast stiff modes more strongly than the KC--ARK4 reference method.

This theoretical damping mechanism explains the numerical observations in the scaled stiff split system and the high-frequency heat equation. When the stiff eigenvalues satisfy
\begin{equation*}
|\lambda|\Delta t\gg 1,
\end{equation*}
the proposed method produces smaller fast-mode residuals and smaller errors under the same implicit-solve budget.

In summary, the proposed method does not claim a uniformly smaller error constant for all problems. Its main advantage lies in the combination of mixed consistency, fourth-order accuracy with a compact two-stage structure, stronger stiff-mode damping in the above limiting regimes, and higher accuracy per implicit solve for stiff multiscale systems.


\section{Numerical validation}
\label{sec:numerical_validation}

This section is redesigned to verify the specific advantages predicted by the comparison in Section~\ref{sec:discussion_comparison}, rather than to claim that the proposed method is uniformly more accurate than classical multi-stage fourth-order IMEX--RK schemes for all split problems. The numerical tests focus on three mechanisms: mixed consistency for non-commuting split systems, enhanced damping of stiff modes, and improved accuracy under the same implicit-solve budget. Consequently, comparisons based only on the same number of time steps are not used as the primary efficiency metric, since different IMEX schemes may require very different numbers of implicit stage solves per step.

The proposed method is denoted by Pro. The reference method is the Kennedy--Carpenter ARK4(3)6L[2]SA additive Runge--Kutta method \cite{KennedyCarpenter2003}, denoted by KC--ARK4. This method is used as a representative classical multi-stage fourth-order one-derivative IMEX--RK scheme. Its Butcher tableaux are given in Section~\ref{subsec:kc_ark4_reference}. KC--ARK4 has six stages and five nontrivial implicit solves per time step, because its first implicit stage has zero diagonal coefficient whereas stages 2--6 have nonzero diagonal coefficient. In contrast, the proposed method uses only two implicit solves per time step. The stage-cost comparison is summarized in Table~\ref{tab:stage_cost}.

\begin{table}[htbp]
\centering
\caption{Stage-cost comparison between the proposed method and the KC--ARK4 reference method.}
\label{tab:stage_cost}
\begin{tabular}{lcccc}
\toprule
Method & Order & Stages & Implicit solves/step & Steps per 100 implicit solves \\
\midrule
Pro       & 4 & 2 & 2 & 50 \\
KC--ARK4  & 4 & 6 & 5 & 20 \\
\bottomrule
\end{tabular}
\end{table}

\subsection{Mixed consistency for a non-commuting split system}
\label{subsec:mixed_consistency_test}

We first consider the linear split system
\begin{equation}
    \frac{d\mathbf u}{dt}=A\mathbf u+B_\kappa \mathbf u,
    \qquad
    A=\begin{pmatrix}0&2\\-1&0\end{pmatrix},
    \qquad
    B_\kappa=\kappa\begin{pmatrix}-3&1\\0.5&-2\end{pmatrix}.
\end{equation}
The matrices satisfy $AB_\kappa\neq B_\kappa A$, and therefore the problem directly tests whether the numerical method correctly retains the mixed split terms. The exact solution is computed by
\begin{equation}
    \mathbf u(t)=\exp\bigl((A+B_\kappa)t\bigr)\mathbf u_0 .
\end{equation}
We take $\kappa=100$, $\mathbf u_0=(1,-0.5)^T$, and $T=0.02$.

To demonstrate that evaluating the derivatives along the full vector field is essential, we compare the proposed full method with an intentionally incomplete variant. The full method uses
\begin{equation}
    \dot F=A(A+B_\kappa)\mathbf u=A^2\mathbf u+AB_\kappa\mathbf u,
    \qquad
    \dot G=B_\kappa(A+B_\kappa)\mathbf u=B_\kappa A\mathbf u+B_\kappa^2\mathbf u,
\end{equation}
whereas the incomplete variant keeps only the self-interaction terms $A^2\mathbf u$ and $B_\kappa^2\mathbf u$ and omits the mixed terms $AB_\kappa\mathbf u$ and $B_\kappa A\mathbf u$.

\begin{table}[htbp]
\centering
\caption{Mixed-consistency test for the non-commuting split system with $\kappa=100$.}
\label{tab:mixed_consistency}
\begin{tabular}{lccc}
\toprule
Method & $N$ & Error & Order \\
\midrule
Pro-full & 10 & $5.16\times 10^{-6}$ & -- \\
Pro-full & 20 & $4.10\times 10^{-7}$ & 3.66 \\
Pro-full & 40 & $2.94\times 10^{-8}$ & 3.80 \\
Pro-full & 80 & $1.98\times 10^{-9}$ & 3.89 \\
Pro-full & 160 & $1.29\times 10^{-10}$ & 3.94 \\
Pro-full & 320 & $8.20\times 10^{-12}$ & 3.97 \\
Pro-incomplete & 10 & $4.50\times 10^{-5}$ & -- \\
Pro-incomplete & 20 & $2.24\times 10^{-5}$ & 1.01 \\
Pro-incomplete & 40 & $1.09\times 10^{-5}$ & 1.03 \\
Pro-incomplete & 80 & $5.40\times 10^{-6}$ & 1.02 \\
Pro-incomplete & 160 & $2.68\times 10^{-6}$ & 1.01 \\
Pro-incomplete & 320 & $1.34\times 10^{-6}$ & 1.01 \\
\bottomrule
\end{tabular}
\end{table}

\begin{figure}[htbp]
\centering
\includegraphics[width=0.72\textwidth]{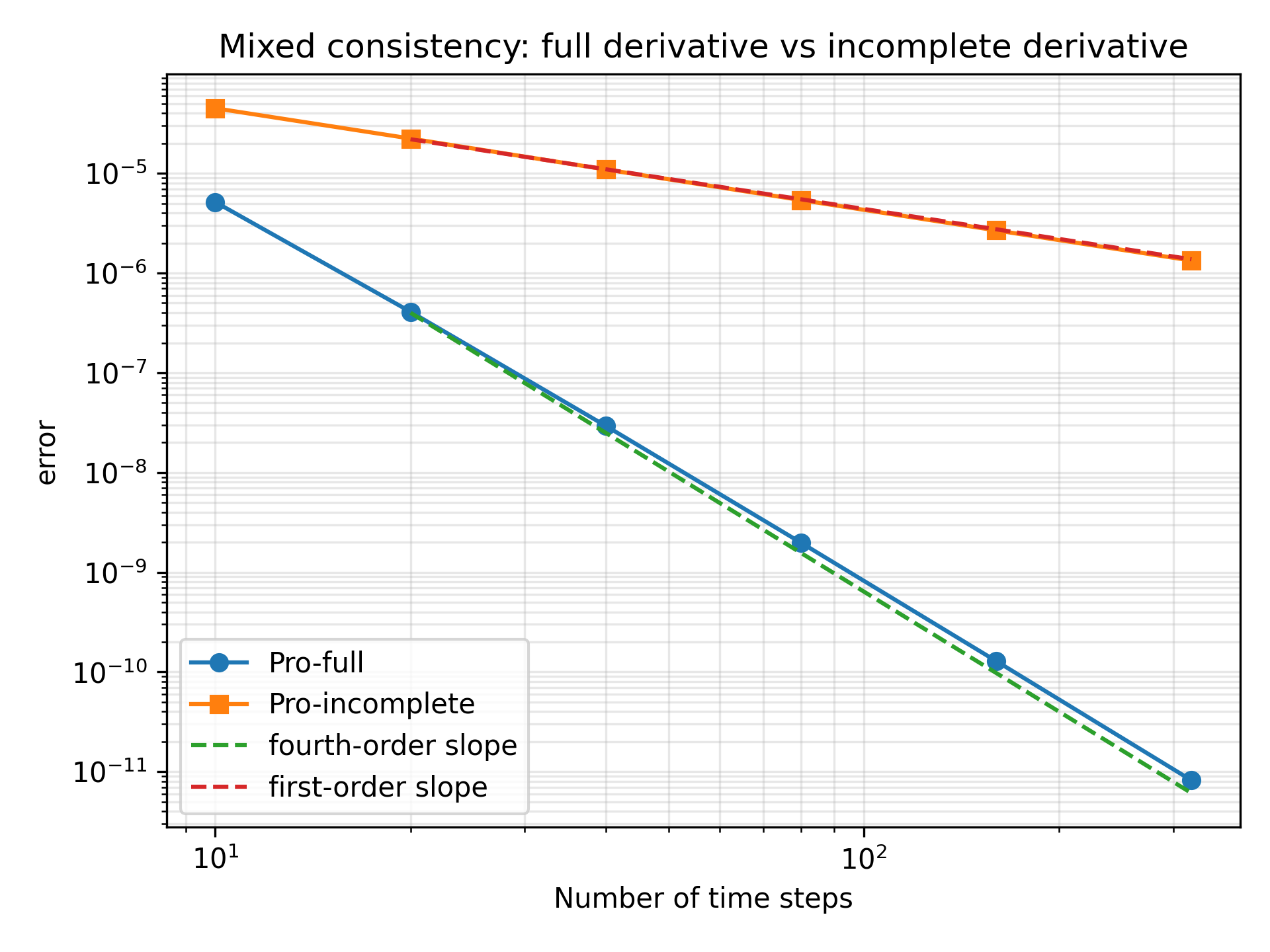}
\caption{Mixed-consistency test. The full-vector-field derivative retains fourth-order convergence, while the incomplete derivative variant loses high-order accuracy.}
\label{fig:mixed_consistency}
\end{figure}

Table~\ref{tab:mixed_consistency} and Figure~\ref{fig:mixed_consistency} show that Pro-full achieves fourth-order convergence, whereas Pro-incomplete is reduced to approximately first order. This confirms that the mixed derivative terms are not merely a formal reformulation; they are essential for preserving high-order accuracy when the split components are non-commuting.

\subsection{Increasing stiffness in the non-commuting split system}
\label{subsec:increasing_kappa}

We next investigate whether the advantage of the proposed method becomes more pronounced as the implicit stiffness increases. The same non-commuting split system is used, but the stiffness parameter $\kappa$ is varied. The final time is set to $T=0.002$, and the comparison is performed under a fixed budget of 20 implicit solves. Thus Pro takes $N_{\rm Pro}=10$ time steps, whereas KC--ARK4 takes $N_{\rm KC}=4$ time steps.

For each $\kappa$, the stiffness index is measured by $\rho(B_\kappa)\Delta t_{\rm KC}$, where $\rho(B_\kappa)$ denotes the spectral radius of the implicit matrix and $\Delta t_{\rm KC}=T/N_{\rm KC}$. The error ratio
\begin{equation}
    \frac{\text{Error(KC--ARK4)}}{\text{Error(Pro)}}
\end{equation}
is used to measure the relative advantage of the proposed method.

\begin{table}[htbp]
\centering
\caption{Increasing stiffness in the non-commuting split system under the same budget of 20 implicit solves.}
\label{tab:increasing_kappa}
\begin{tabular}{ccccc}
\toprule
$\kappa$ & $\rho(B_\kappa)\Delta t_{\rm KC}$ & Error Pro & Error KC--ARK4 & Ratio \\
\midrule
100 & 0.17 & $3.95\times 10^{-8}$ & $2.52\times 10^{-7}$ & $6.39\times 10^{0}$ \\
200 & 0.34 & $6.07\times 10^{-7}$ & $4.15\times 10^{-6}$ & $6.83\times 10^{0}$ \\
500 & 0.84 & $6.63\times 10^{-6}$ & $5.54\times 10^{-5}$ & $8.36\times 10^{0}$ \\
1000 & 1.68 & $5.40\times 10^{-6}$ & $6.67\times 10^{-5}$ & $1.24\times 10^{1}$ \\
2000 & 3.37 & $6.78\times 10^{-7}$ & $1.08\times 10^{-5}$ & $1.59\times 10^{1}$ \\
5000 & 8.42 & $1.86\times 10^{-9}$ & $2.02\times 10^{-4}$ & $1.09\times 10^{5}$ \\
\bottomrule
\end{tabular}
\end{table}

\begin{figure}[htbp]
\centering
\includegraphics[width=0.72\textwidth]{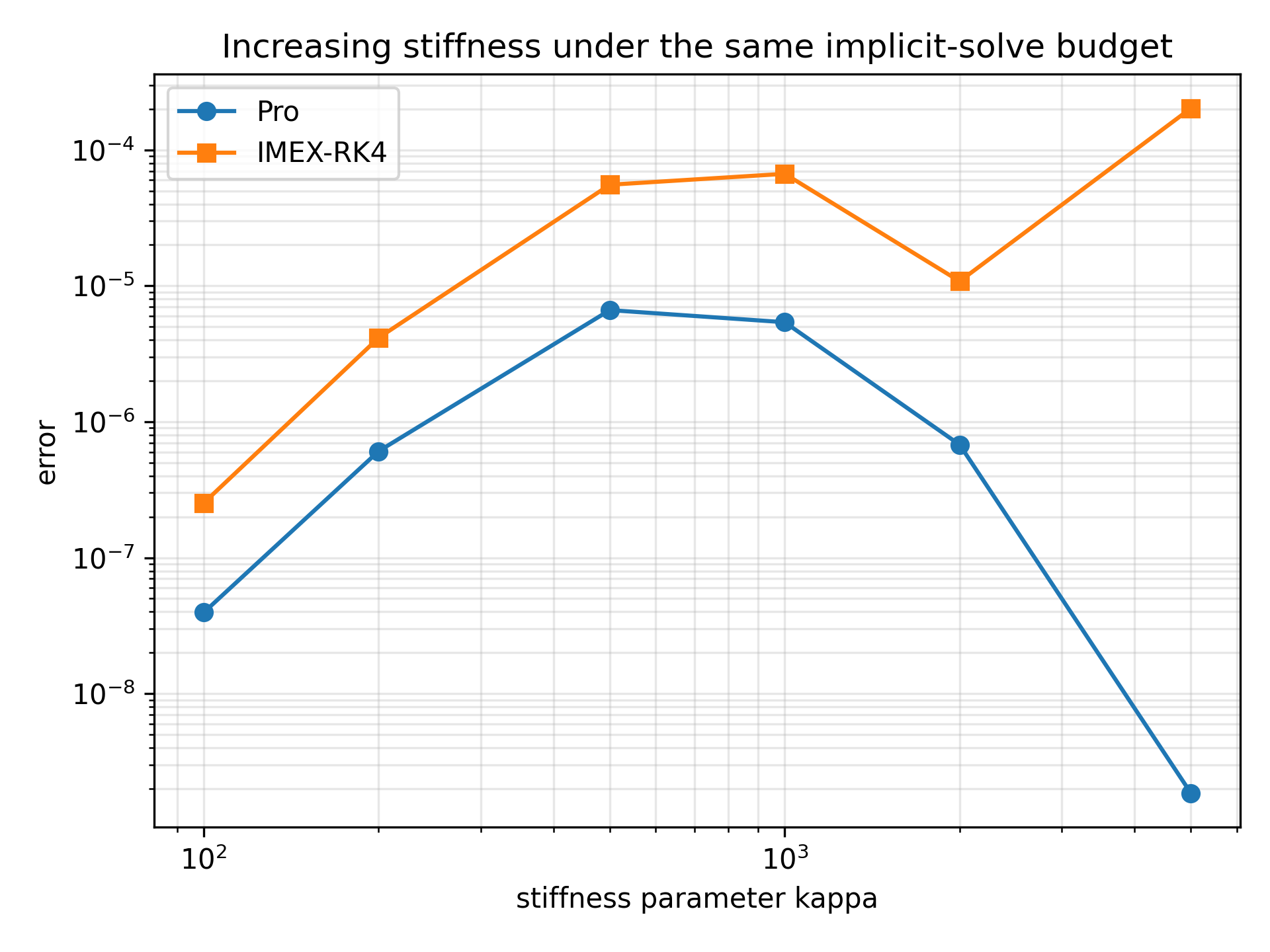}
\caption{Error comparison for increasing stiffness under the same implicit-solve budget.}
\label{fig:increasing_kappa_errors}
\end{figure}

\begin{figure}[htbp]
\centering
\includegraphics[width=0.72\textwidth]{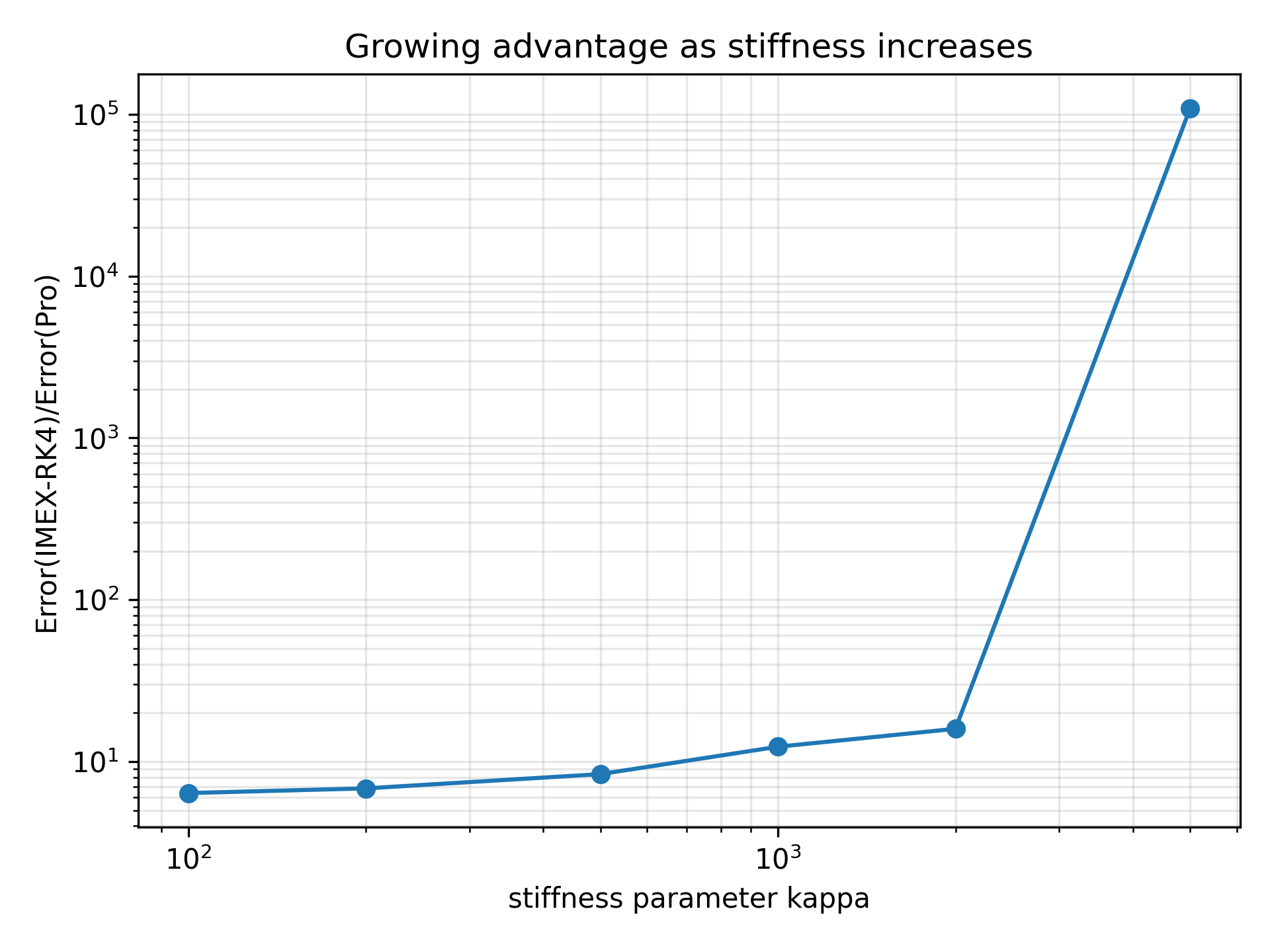}
\caption{Error ratio as the stiffness parameter increases. The growing ratio indicates that the proposed method becomes more favorable in stronger stiff regimes.}
\label{fig:increasing_kappa_ratio}
\end{figure}

The results in Table~\ref{tab:increasing_kappa} and Figures~\ref{fig:increasing_kappa_errors}--\ref{fig:increasing_kappa_ratio} show that the relative advantage of Pro generally increases with the stiffness level. This behavior is consistent with the theoretical stiff-decay analysis: in the strongly stiff regime, the proposed method damps fast implicit modes more strongly than the KC--ARK4 reference method.

\subsection{Asymptotic stiff-mode damping}
\label{subsec:scalar_damping}

We now isolate the stiff-mode damping mechanism by considering the purely implicit scalar equation
\begin{equation}
    u'=\lambda u,
    \qquad z=\lambda\Delta t<0.
\end{equation}
For this test, the proposed method satisfies
\begin{equation}
    R_{\rm Pro}(z)=O(z^{-2}),\qquad z\to -\infty,
\end{equation}
whereas the KC--ARK4 reference method used here has the slower decay
\begin{equation}
    R_{\rm IMEX}(z)=O(z^{-1}),\qquad z\to -\infty.
\end{equation}

\begin{table}[htbp]
\centering
\caption{Asymptotic damping factors for the scalar stiff mode.}
\label{tab:scalar_damping}
\begin{tabular}{cccccc}
\toprule
$z$ & $|R_{\rm Pro}|$ & $|z|^2|R_{\rm Pro}|$ & $|R_{\rm IMEX}|$ & $|z||R_{\rm IMEX}|$ & Ratio \\
\midrule
$-10^{2}$ & $8.84\times 10^{-4}$ & 8.84 & $7.57\times 10^{-2}$ & 7.57 & $8.57\times 10^{1}$ \\
$-10^{3}$ & $9.88\times 10^{-6}$ & 9.88 & $9.14\times 10^{-3}$ & 9.14 & $9.25\times 10^{2}$ \\
$-10^{4}$ & $9.99\times 10^{-8}$ & 9.99 & $9.31\times 10^{-4}$ & 9.31 & $9.33\times 10^{3}$ \\
$-10^{5}$ & $1.00\times 10^{-9}$ & 10.00 & $9.33\times 10^{-5}$ & 9.33 & $9.33\times 10^{4}$ \\
$-10^{6}$ & $1.00\times 10^{-11}$ & 10.00 & $9.33\times 10^{-6}$ & 9.33 & $9.33\times 10^{5}$ \\
\bottomrule
\end{tabular}
\end{table}

\begin{figure}[htbp]
\centering
\includegraphics[width=0.72\textwidth]{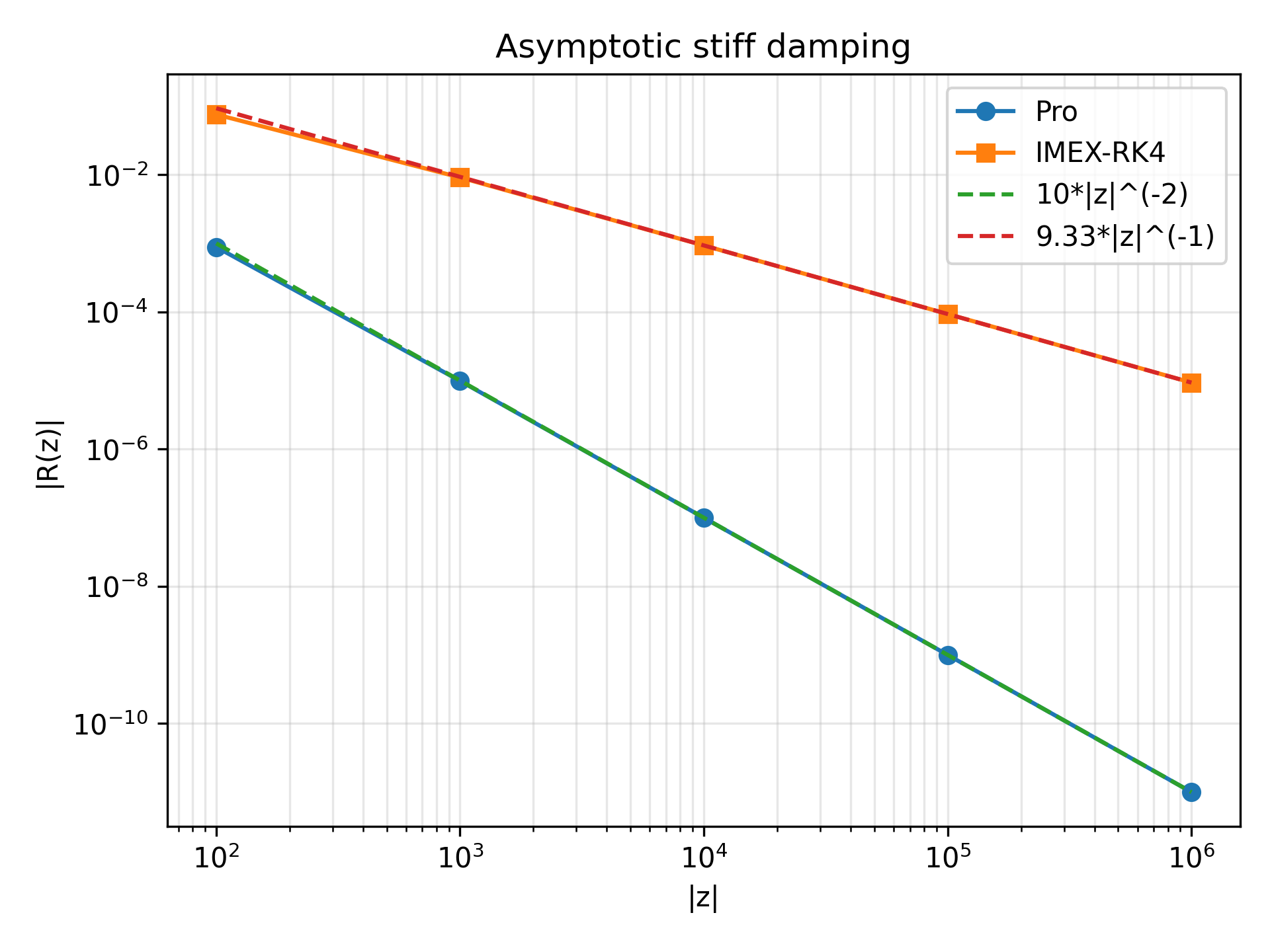}
\caption{Asymptotic stiff damping on the negative real axis. Pro follows the $O(|z|^{-2})$ reference slope, whereas KC--ARK4 follows the slower $O(|z|^{-1})$ slope.}
\label{fig:scalar_damping}
\end{figure}

The nearly constant values of $|z|^2|R_{\rm Pro}|$ and $|z||R_{\rm IMEX}|$ verify the predicted asymptotic decay rates. Therefore, the relative damping advantage grows approximately linearly with $|z|$.

\subsection{Work-normalized comparison for a strongly stiff split system}
\label{subsec:work_normalized}

To examine the efficiency advantage of the compact two-stage structure, we fix $\kappa=1000$ in the non-commuting split system and vary the total number of implicit solves. Since Pro uses two implicit solves per step while KC--ARK4 uses five nontrivial implicit solves per step, Pro can take more time steps under the same implicit-solve budget.

\begin{table}[htbp]
\centering
\caption{Work-normalized comparison for the scaled split system with $\kappa=1000$.}
\label{tab:work_normalized}
\begin{tabular}{cccccc}
\toprule
Budget & $N_{\rm Pro}$ & $N_{\rm KC}$ & Error Pro & Error KC--ARK4 & Ratio \\
\midrule
20 & 10 & 4 & $5.40\times 10^{-6}$ & $6.67\times 10^{-5}$ & 12.35 \\
30 & 15 & 6 & $1.25\times 10^{-6}$ & $1.19\times 10^{-5}$ & 9.54 \\
40 & 20 & 8 & $4.29\times 10^{-7}$ & $3.63\times 10^{-6}$ & 8.46 \\
60 & 30 & 12 & $9.29\times 10^{-8}$ & $6.99\times 10^{-7}$ & 7.53 \\
80 & 40 & 16 & $3.08\times 10^{-8}$ & $2.19\times 10^{-7}$ & 7.10 \\
120 & 60 & 24 & $6.40\times 10^{-9}$ & $4.29\times 10^{-8}$ & 6.70 \\
160 & 80 & 32 & $2.08\times 10^{-9}$ & $1.35\times 10^{-8}$ & 6.51 \\
240 & 120 & 48 & $4.21\times 10^{-10}$ & $2.66\times 10^{-9}$ & 6.32 \\
\bottomrule
\end{tabular}
\end{table}

\begin{figure}[htbp]
\centering
\includegraphics[width=0.72\textwidth]{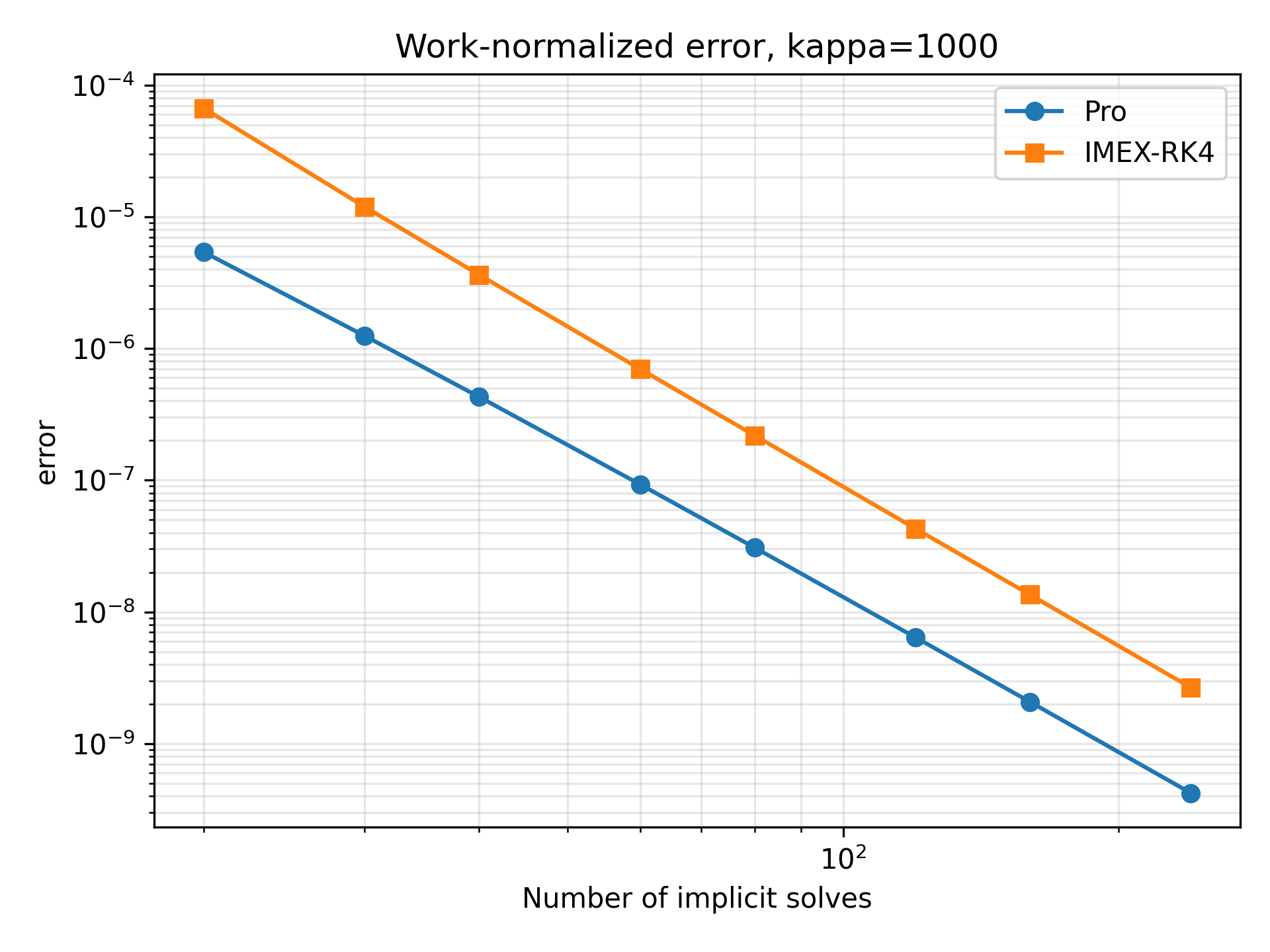}
\caption{Work-normalized error comparison for $\kappa=1000$. The horizontal axis is the total number of implicit solves.}
\label{fig:work_normalized_errors}
\end{figure}

\begin{figure}[htbp]
\centering
\includegraphics[width=0.72\textwidth]{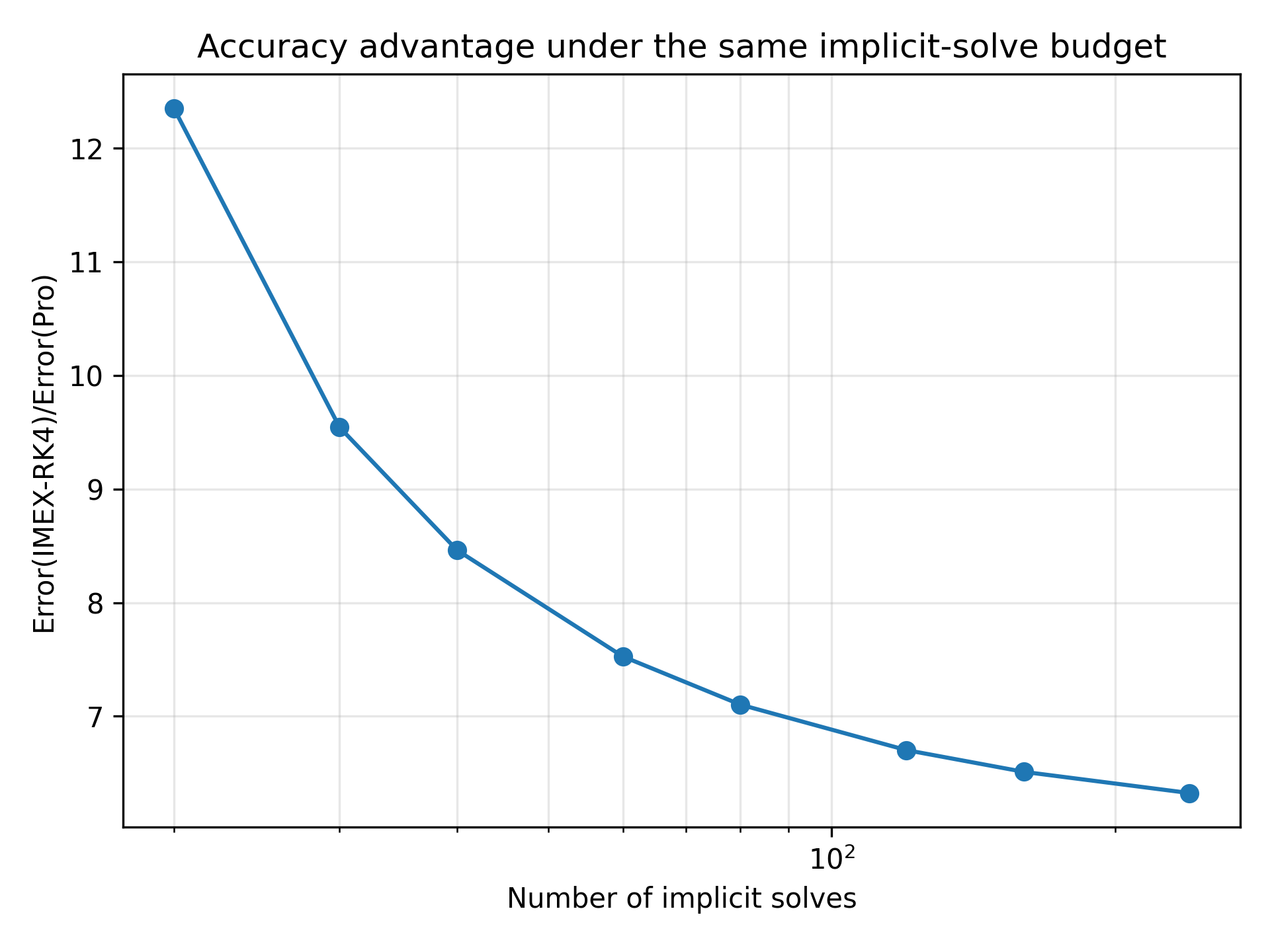}
\caption{Accuracy advantage under the same implicit-solve budget for $\kappa=1000$.}
\label{fig:work_normalized_ratio}
\end{figure}

The proposed method consistently gives smaller errors under the same implicit-solve budget. This supports the claim that the compact two-stage structure improves accuracy per implicit solve for stiff split systems.

\subsection{A practical stiff PDE example: increasing high-frequency diffusion}
\label{subsec:advdiff_increasingK}

As a more practical stiff example, we consider the periodic advection--diffusion equation
\begin{equation}
    u_t+a u_x=\nu u_{xx},
    \qquad x\in[0,2\pi].
\end{equation}
The advection part is treated explicitly and the diffusion part implicitly. For a Fourier mode $\exp(iKx)$, the split eigenvalues are
\begin{equation}
    \lambda_E=-iaK,
    \qquad
    \lambda_I=-\nu K^2.
\end{equation}
Thus, increasing the wave number $K$ strengthens the implicit diffusive stiffness quadratically. We take $a=0.2$, $\nu=1$, $T=0.02$, and a high-frequency amplitude $10^{-2}$. The comparison is performed under a fixed budget of 40 implicit solves.

\begin{table}[htbp]
\centering
\caption{One-dimensional advection--diffusion high-mode test with increasing wave number $K$ under the same budget of 40 implicit solves.}
\label{tab:advdiff_1d_increasingK}
\begin{tabular}{ccccc}
\toprule
$K$ & $|\lambda_I|\Delta t_{\rm KC}$ & Error Pro & Error KC--ARK4 & Ratio \\
\midrule
8 & 0.16 & $3.13\times 10^{-10}$ & $2.00\times 10^{-9}$ & 6.37 \\
12 & 0.36 & $3.40\times 10^{-9}$ & $2.34\times 10^{-8}$ & 6.89 \\
16 & 0.64 & $5.86\times 10^{-9}$ & $4.51\times 10^{-8}$ & 7.70 \\
20 & 1.00 & $2.74\times 10^{-9}$ & $2.45\times 10^{-8}$ & 8.91 \\
24 & 1.44 & $4.44\times 10^{-10}$ & $4.83\times 10^{-9}$ & 10.87 \\
28 & 1.96 & $2.83\times 10^{-11}$ & $4.19\times 10^{-10}$ & 14.82 \\
32 & 2.56 & $7.61\times 10^{-13}$ & $2.15\times 10^{-11}$ & 28.23 \\
\bottomrule
\end{tabular}
\end{table}

\begin{figure}[htbp]
\centering
\includegraphics[width=0.72\textwidth]{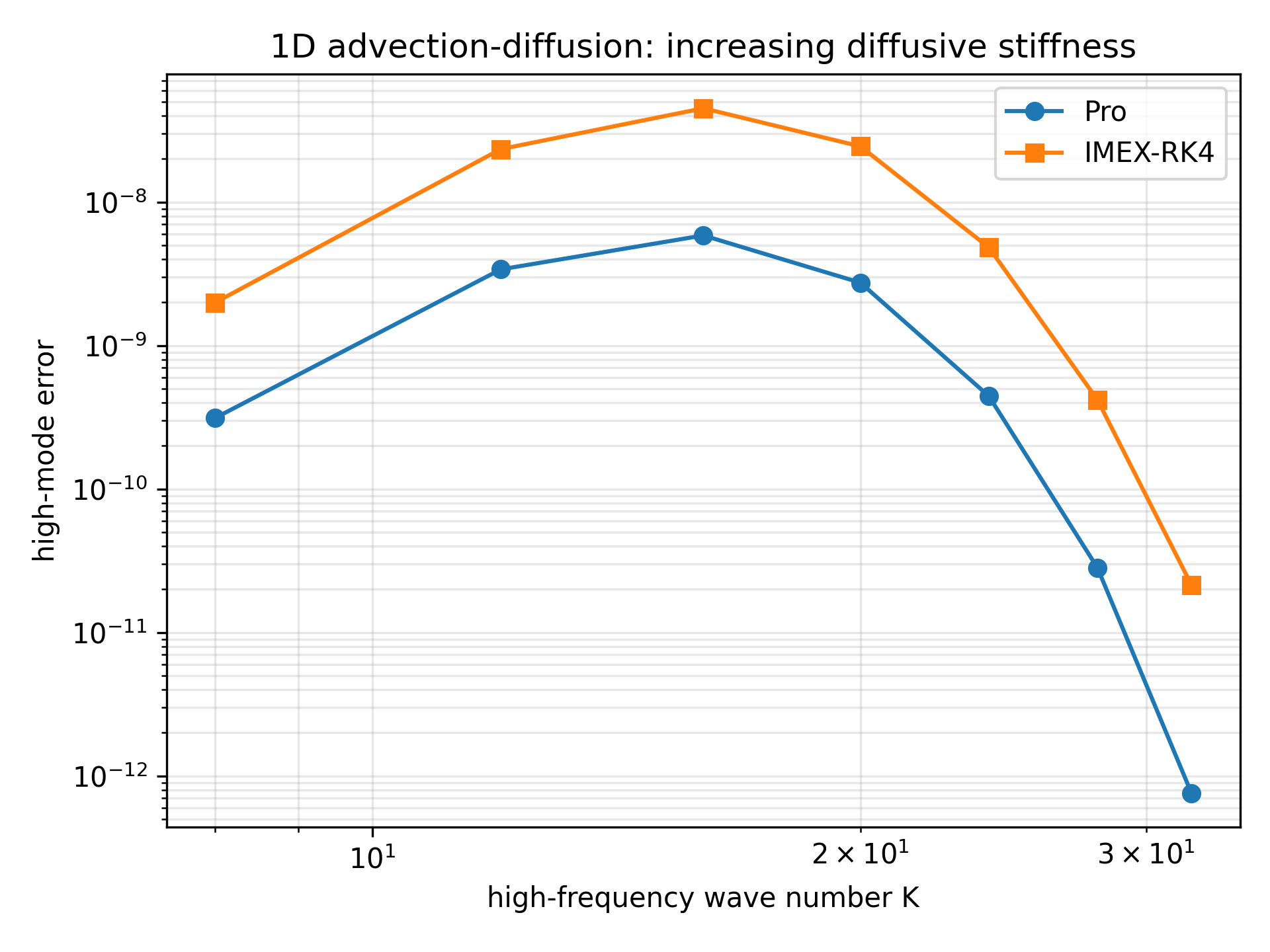}
\caption{High-mode error for the one-dimensional advection--diffusion equation as the wave number $K$ increases.}
\label{fig:advdiff_1d_errors}
\end{figure}

\begin{figure}[htbp]
\centering
\includegraphics[width=0.72\textwidth]{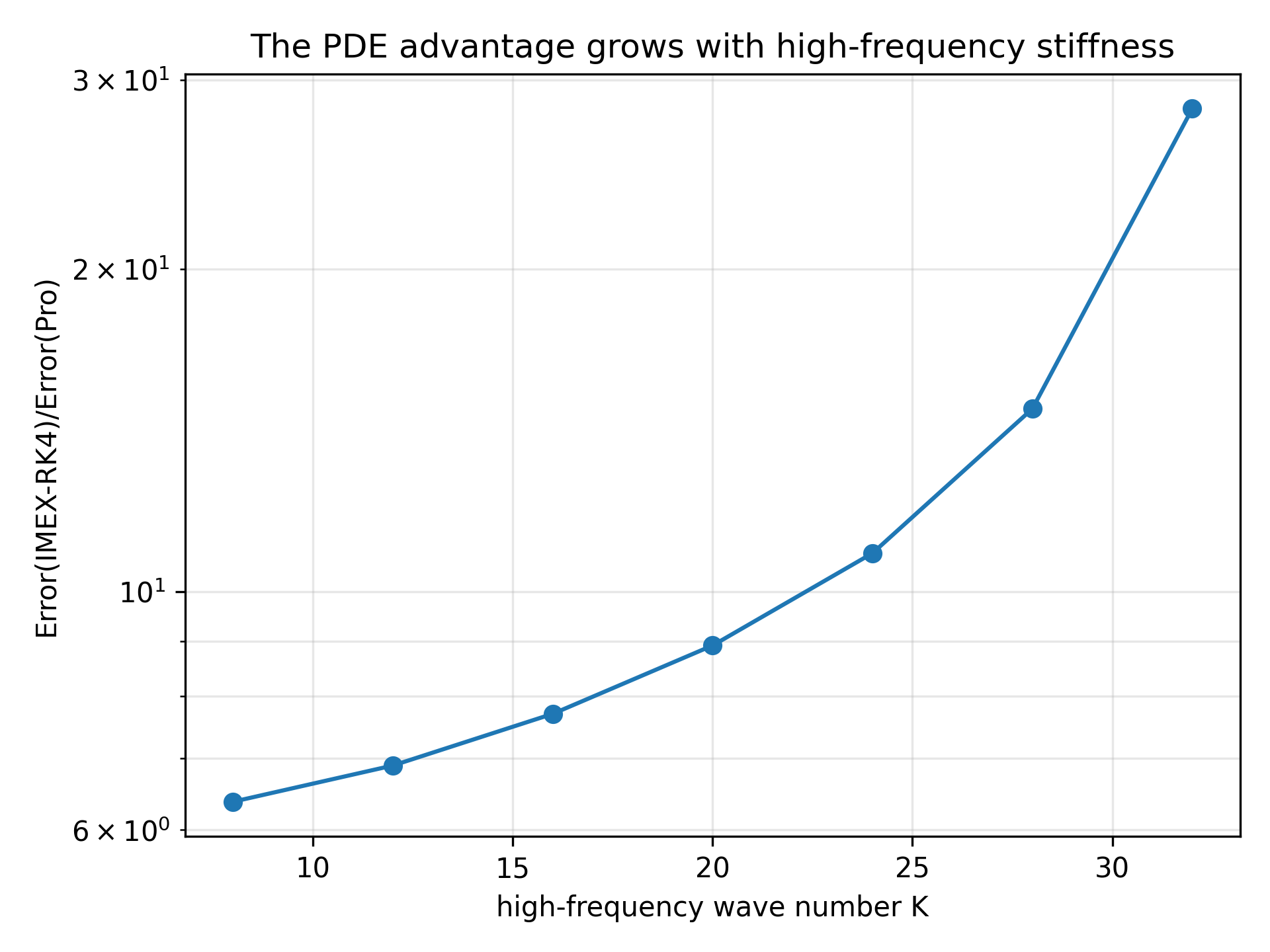}
\caption{The PDE error ratio increases with the high-frequency stiffness. This confirms that the stiff-decay advantage is also visible in a classical advection--diffusion problem.}
\label{fig:advdiff_1d_ratio}
\end{figure}

Table~\ref{tab:advdiff_1d_increasingK} and Figures~\ref{fig:advdiff_1d_errors}--\ref{fig:advdiff_1d_ratio} demonstrate that the advantage of Pro becomes clearer as the diffusive high-frequency mode becomes stiffer. This experiment is important because it shows that the stronger damping mechanism is not limited to scalar model equations or small matrix systems, but also appears in a standard split PDE setting.

\subsection{Two-dimensional advection--diffusion high-mode test}
\label{subsec:advdiff_2d}

Finally, we consider the two-dimensional periodic advection--diffusion equation
\begin{equation}
    u_t+a u_x+b u_y=\nu(u_{xx}+u_{yy}),
    \qquad (x,y)\in[0,2\pi]^2.
\end{equation}
For the Fourier mode $\exp(i(k_xx+k_yy))$, the split eigenvalues are
\begin{equation}
    \lambda_E=-i(ak_x+bk_y),
    \qquad
    \lambda_I=-\nu(k_x^2+k_y^2).
\end{equation}
We take $a=b=0.2$, $\nu=0.5$, $(k_x,k_y)=(32,24)$, and $T=0.02$. The comparison is again normalized by the number of implicit solves.

\begin{table}[htbp]
\centering
\caption{Two-dimensional advection--diffusion high-mode test under the same implicit-solve budgets.}
\label{tab:advdiff_2d_budget}
\begin{tabular}{ccccccc}
\toprule
Budget & $N_{\rm Pro}$ & $N_{\rm KC}$ & Error Pro & Error KC--ARK4 & Ratio & $|\lambda_I|\Delta t_{\rm KC}$ \\
\midrule
20 & 10 & 4 & $2.27\times 10^{-10}$ & $7.31\times 10^{-8}$ & 322.15 & 4.00 \\
30 & 15 & 6 & $6.07\times 10^{-11}$ & $1.70\times 10^{-9}$ & 27.97 & 2.67 \\
40 & 20 & 8 & $2.25\times 10^{-11}$ & $3.44\times 10^{-10}$ & 15.30 & 2.00 \\
60 & 30 & 12 & $5.29\times 10^{-12}$ & $5.52\times 10^{-11}$ & 10.43 & 1.33 \\
80 & 40 & 16 & $1.84\times 10^{-12}$ & $1.65\times 10^{-11}$ & 8.96 & 1.00 \\
120 & 60 & 24 & $4.02\times 10^{-13}$ & $3.14\times 10^{-12}$ & 7.79 & 0.67 \\
160 & 80 & 32 & $1.34\times 10^{-13}$ & $9.79\times 10^{-13}$ & 7.28 & 0.50 \\
240 & 120 & 48 & $2.81\times 10^{-14}$ & $1.91\times 10^{-13}$ & 6.81 & 0.33 \\
\bottomrule
\end{tabular}
\end{table}

\begin{figure}[htbp]
\centering
\includegraphics[width=0.72\textwidth]{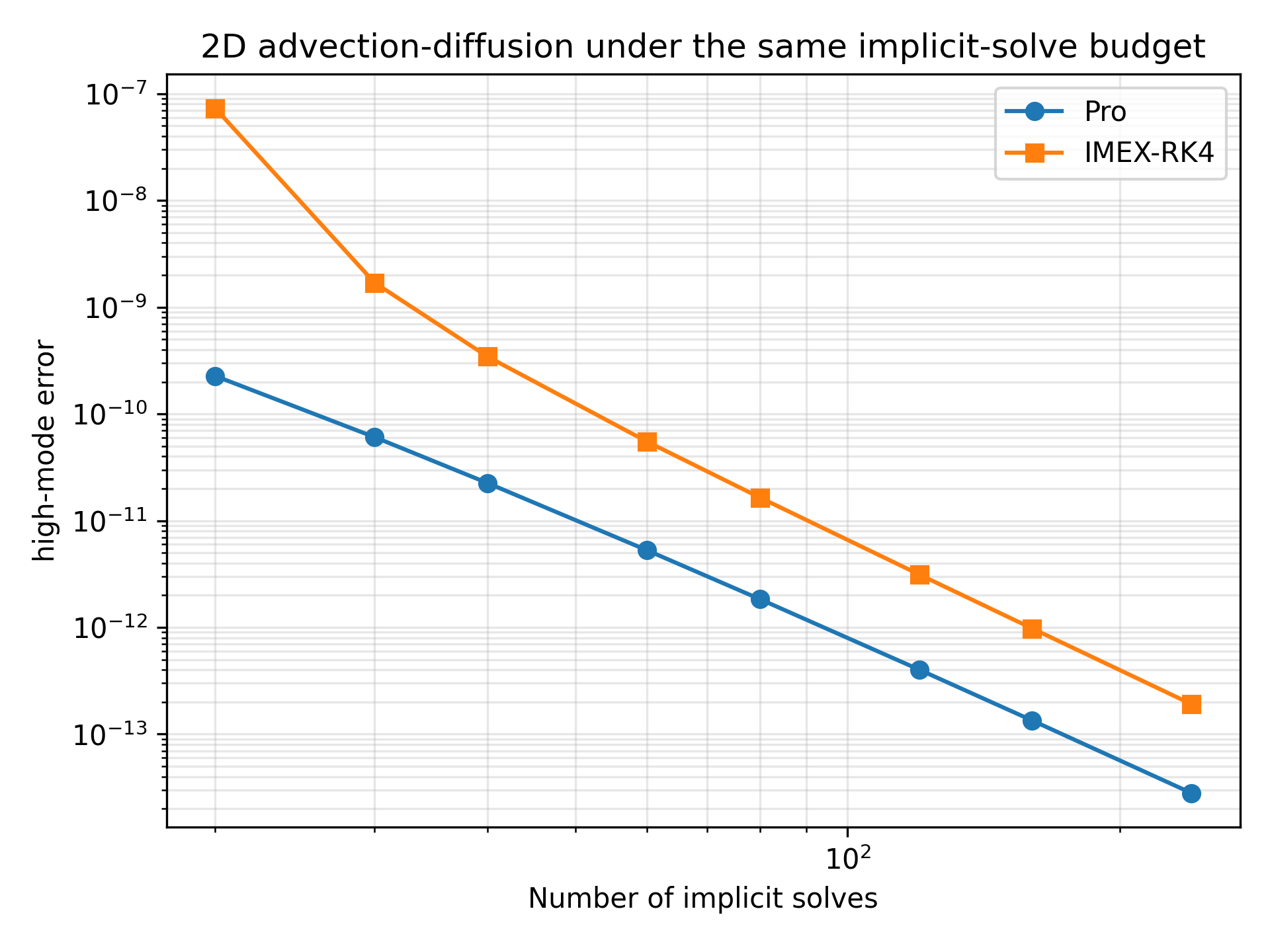}
\caption{Two-dimensional advection--diffusion high-mode error under the same implicit-solve budgets.}
\label{fig:advdiff_2d_budget}
\end{figure}

The two-dimensional test further confirms that the proposed method can produce smaller high-mode errors under the same implicit-solve budget in stiff split PDE problems. The advantage is strongest for coarse budgets, where the effective implicit stiffness number $|\lambda_I|\Delta t_{\rm KC}$ is larger.

\subsection{Summary of the numerical evidence}
\label{subsec:numerical_summary}

The numerical results support the conclusions of Section~\ref{sec:discussion_comparison}. First, the mixed-consistency test shows that the full-vector-field derivative is essential: once the mixed derivative terms are omitted, high-order accuracy is lost. Second, the scalar stiff-mode test verifies the stronger $O(z^{-2})$ stiff decay of the proposed method, in contrast to the $O(z^{-1})$ decay of the KC--ARK4 reference method. Third, the increasing-stiffness matrix test and the one- and two-dimensional high-frequency advection--diffusion tests show that the advantage of the proposed method becomes more pronounced as the effective stiff modes become stronger. Finally, the work-normalized comparisons demonstrate that the compact two-stage structure leads to smaller errors under the same implicit-solve budget. Therefore, the main advantage of the proposed method lies in stiff multiscale regimes where mixed consistency, stiff-mode damping, and the cost of implicit solves are all important.


\section{Conclusion}
\label{sec:conclusion}

This paper constructs and analyzes a fourth-order IMEX-like two-derivative time discretization method for additively split evolution systems. Unlike classical multi-stage fourth-order implicit--explicit Runge--Kutta methods, which usually rely on several stages and complicated explicit--implicit coupling order conditions to achieve high-order accuracy, the proposed method follows a different construction principle. It employs not only the split physical quantities, but also their temporal derivatives evaluated along the full vector field, thereby naturally incorporating explicit terms, implicit terms, and mixed coupling information within the scheme. Based on this construction, the proposed method achieves fourth-order temporal accuracy through a compact Hermite-type integral formulation.

Compared with classical multi-stage fourth-order IMEX--RK schemes, the proposed scheme has several advantages. First, it possesses inherent mixed consistency: the coupling between the explicit and implicit components is not enforced through additional complicated order conditions, but is naturally reflected through the temporal derivatives evaluated along the complete vector field. Second, the method has a compact structure. By using physical quantities together with their temporal derivatives, it attains fourth-order accuracy without relying on numerous stages, coefficients, or intricate coupling constraints, thus reducing the implementation burden associated with classical multi-stage fourth-order IMEX--RK schemes and improving the simplicity of both formulation and computation. Third, for stiff problems, the implicit two-derivative structure enhances the damping of stiff modes in the purely implicit limit and in the strong implicit-stiffness limit with a fixed explicit component. In addition, the method admits a clear Hermite-type integral interpretation, remains consistent with TSFO-type methods in the non-stiff limit, and exhibits favorable stiff decay behavior in the purely implicit scalar limit.

Nevertheless, the proposed method also has some limitations. Since temporal derivative terms are involved in the scheme, these derivatives must be either analytically evaluated or numerically approximated in practical computations. Moreover, the method does not belong to the strict framework of classical IMEX--RK methods. A complete characterization of its two-dimensional stability region, especially the stability structure under the combined effects of the explicit and implicit stability variables, still requires further investigation.

Numerical experiments verify the main theoretical conclusions of this paper. In the non-commuting linear split system, the proposed full-vector-field formulation retains fourth-order convergence, whereas the incomplete derivative variant loses high-order accuracy, confirming the necessity of the mixed derivative terms. The purely implicit scalar test verifies the predicted $O(z^{-2})$ stiff decay and the stronger damping of stiff modes compared with the KC--ARK4 reference method. The increasing-stiffness matrix tests and the one- and two-dimensional high-frequency advection--diffusion tests further show that the proposed method produces smaller errors under the same implicit-solve budgets, and that this advantage becomes more pronounced when the effective stiff modes become stronger. Overall, the proposed IMEX-like two-derivative scheme provides a compact, physically transparent high-order time discretization approach with strong stiff-mode damping in the above limiting regimes, particularly in situations where temporal derivative information is available or can be efficiently approximated.

\section*{Acknowledgments}

This work was supported by the Key Program of Henan Higher Education Institutions (Grant No. 26A110007), the Young Talents Fund of Henan Province (Grant No. 252300423500), the Double First-Class Project of the School of Geomatics of Henan Polytechnic University (Grant No. BSJJ202306), and the Doctoral Startup Foundation of Henan Polytechnic University (Grant No. B2024-60).


\end{document}